\documentclass[12pt]{amsart}
\usepackage{amsmath}
\usepackage{amsfonts}
\usepackage{amssymb}
\usepackage{amsthm}
\usepackage[all]{xy}

\newtheorem{theorem}{Theorem}[section]
\newtheorem{lemma}[theorem]{Lemma}
\newtheorem{proposition}[theorem]{Proposition}
\newtheorem{corollary}[theorem]{Corollary}
\newtheorem{definition}[theorem]{Definition}
\newtheorem{remark}[theorem]{Remark}

\numberwithin{equation}{section}

\begin{document}

\baselineskip=15pt

\title{Picard bundles and Twisted Picard bundles on the Jacobian of a curve} 
\author[Usha]{Usha N. Bhosle}

\address{ Mathematics and Statistics Unit, Indian Statistical Institute, Bangalore 560059, India}

\email{usnabh07@gmail.com}

\subjclass[2010]{Primary 14H60; Secondary 14D20}

\keywords{Twisted Picard bundles;  ACM bundles; stability; embedding }

\thanks{ This work was done during my tenure as an INSA Senior Scientist  at the Indian Statistical Institute, Bangalore.}


\date{26 July 2024}

\begin{abstract}
  Let $Y$ denote an irreducible projective curve with at most nodes as singularities and defined over  an algebraically closed field of characteristic zero. We study the restriction of the twisted Picard bundles on the compactified Jacobian $\overline{J}(Y)$ of $Y$ to the embedded curve in $\overline{J}(Y)$. As an application, we show that for $g =2$ and each integer $r \ge 3$, there is a two-dimensional family of stable ACM bundles on the compactified Jacobian which has the Picard bundle in its limit. We define an embedding $\alpha_Y$ of the (generalised) Jacobian $J(Y)$ in the moduli space $U^s_Y(n,d)$ of stable vector bundles on $Y$ using a twisted restriction $E_Y$ of a Picard bundle to $Y \subset J(Y)$. We show that (under suitable conditions) the restriction of the universal bundle $\mathcal{U}$ to $Y \times J(Y)$ is stable for suitable polarisation. For the embedding of a smooth curve $Y$ given by $E_Y \otimes B, B$ a line bundle of degree $b$,  we show that the restriction of the Picard bundle on $U^s_Y(n, d+nb)$ to $J(Y)$ is $\theta$-semistable for $b \ge 2g-1$ and $\theta$-stable for $b \ge 2g$. We also determine the relation between the restriction of the theta divisor on $U^s_Y(n,d+nb)$ to $J(Y)$ and the theta divisor $\theta$ on $\overline{J}(Y)$.         
\end{abstract}

\maketitle

\section{Introduction}

                    Let $Y$ denote an irreducible projective curve with at most nodes as singularities and defined over  an algebraically closed field of characteristic $0$. Fix a nonsingular point $t \in Y$. Let $J(Y)= Pic^0(Y)$ be its generalised Jacobian and $\overline{J}(Y)$ the compactified Jacobian. There is a Poincar\'e sheaf $\mathcal{P} \to \overline{J}(Y) \times \overline{J}(Y)$ which restricts to the (normalised) Poincar\'e sheaf $\mathcal{L} \to \overline{J}(Y) \times Y$. For $r \ge 2g-1$, the  Picard  bundle $E_r$ on $\overline{J}(Y)$ is the direct image of $\mathcal{L} \otimes p_Y^*\mathcal{O}_Y(r t)$ on $\overline{J}(Y)$. The Picard bundles and their restrictions to the curve $Y$ embedded in $\overline{J}(Y)$ have been studied extensively (\cite{EL}, \cite{K}, \cite{M1}, \cite{M2}, \cite{Mu}, \cite{BhP2}, \cite{BhS}, to name a few). 
                    
                    In this small note, we first study the twisted Picard bundles $V_{r,x} := E_r \otimes \mathcal{P}_x$ where $\mathcal{P}_x := \mathcal{P}\vert_{\overline{J}(Y) \times x},   x \in J(Y)$.  Considering $Y$ embedded in $\overline{J}(Y)$ using a line bundle of degree $g-1$, we study the restrictions of $V_{r,x}$ to the curve $Y \subset \overline{J}(Y)$.  The bundles $V_{r,x}$ are stable with respect to the theta line bundle for $g \ge 1, r \ge 2g -1$ (by \cite[Theorem 1.5]{BhP2}), their restrictions to $Y$ are also stable (See \cite[Sections 8 and 9]{BhP2}).  
 We  determine some cohomologies of $V_{r,x}(j \theta), j \in \mathbb{Z}$, where $\theta$ denotes the ample theta line bundle on the compactified Jacobian.
  
   We remark that the bundles $V_{r,x}, r \ge 2g-1, x^* \in Y$ are not ACM bundles as they have non vanishing first cohomology by Theorem \ref{chomB} ($x^*$ denotes the Fourier-Mukai transform of $x$).  In particular the Picard bundles are not ACM bundles. For $g =2$, we compute the cohomology of the restrictions of the Poincar\'e bundle $\mathcal{P}_x$ with ${\mathcal{P}_x}\vert_Y \neq \mathcal{O}_Y$. 
\begin{theorem} (Theorem \ref{PxACMg2}) 
        Assume that $g=2$ and $\mathcal{P}_x\vert_Y \neq \mathcal{O}_Y, \ \mathcal{P}_x$ being the restriction of the Poincar\'e bundle (defined by \eqref{Px}). Then \\
(1) $h^1(\overline{J}(Y), \mathcal{P}_x(j \theta)) = 0\, , \ \forall j \in \mathbb{Z}\, .$ In particular,  $\mathcal{P}_x$ is an ACM bundle on $\overline{J}(Y)$. \\
(2) $h^0(\overline{J}(Y), \mathcal{P}_x(j \theta)) = 0, \ \forall j \le 0\ ,$  \\
\ \  $h^0(\overline{J}(Y), \mathcal{P}_x(j \theta)) = j^2, \ \forall j \ge 0\ .$  \\ 
(3) $h^2(\overline{J}(Y), \mathcal{P}_x(j \theta)) = 0, \ \forall j \ge 0\ ,$\\
~~ $h^2(\overline{J}(Y), \mathcal{P}_x(j \theta)) = j^2, \ \forall j \le 0\ .$\\
In particular, $\mathcal{P}_x$ is not a Ulrich bundle.
\end{theorem}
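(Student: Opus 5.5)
Since $g=2$, $\overline{J}(Y)$ is a surface, $\theta$ is an ample principal polarisation with $\theta^2 = 2$, and $Y\subset\overline{J}(Y)$, embedded via a line bundle of degree $g-1=1$, is itself a theta divisor. The plan is therefore to take $\theta = \mathcal{O}(Y)$ (up to translation) and to induct on $j$ using the restriction sequences
\[
0 \to \mathcal{P}_x((j-1)\theta) \to \mathcal{P}_x(j\theta) \to \mathcal{P}_x(j\theta)\vert_Y \to 0 .
\]
The base case is $j=0$. Because $x\in J(Y)$ and $\mathcal{P}_x\vert_Y\neq\mathcal{O}_Y$, the bundle $\mathcal{P}_x$ is a nontrivial line bundle in $\mathrm{Pic}^0$ of $\overline{J}(Y)$. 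I would take $h^i(\overline{J}(Y),\mathcal{P}_x)=0$ for all $i$ as the known vanishing for nontrivial elements of $\mathrm{Pic}^0$, which is also the Fourier--Mukai vanishing used for Theorem \ref{chomB} (the compactified-Jacobian version, via the Poincar\'e sheaf, as in \cite{BhP2}). In particular $\chi(\mathcal{P}_x)=0$, and Riemann--Roch on the surface gives $\chi(\mathcal{P}_x(j\theta)) = j^2\theta^2/2 = j^2$.

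For the restrictions, $\mathcal{P}_x(j\theta)\vert_Y = L_x\otimes \mathcal{O}_Y(jY)$, where $L_x:=\mathcal{P}_x\vert_Y$ has degree $0$ and is nontrivial, and $\mathcal{O}_Y(Y)$ has degree $\theta^2=2 = 2g-2$. By adjunction (with $K_{\overline{J}}$ trivial), $\mathcal{O}_Y(Y)=\omega_Y$. Thus $\mathcal{P}_x(j\theta)\vert_Y = L_x\otimes\omega_Y^{j}$, of degree $2j$ on a curve of arithmetic genus $2$.

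\textbf{Case $j\ge1$.} For $j=1$, $h^1(L_x\otimes\omega_Y)=h^0(L_x^{-1})=0$ since $L_x$ is nontrivial. For $j\ge2$ the degree is $2j>2g-2$, so $h^1=0$ as well. Hence $h^0(\mathcal{P}_x(j\theta)\vert_Y)=2j-1$ and $h^1=0$. The long exact sequence, started from the vanishing at $j=0$, then gives inductively $h^1(\mathcal{P}_x(j\theta))=h^2(\mathcal{P}_x(j\theta))=0$ and
\[
h^0(\mathcal{P}_x(j\theta))=h^0(\mathcal{P}_x((j-1)\theta))+2j-1=j^2 .
\]
One could also reach this directly from Kodaira/Mumford vanishing for ample line bundles on (compactified) abelian surfaces, combined with $\chi=j^2$.

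\textbf{Case $j\le0$.} I would use Serre duality on $\overline{J}(Y)$, which is Gorenstein with trivial dualising sheaf, to get $h^i(\mathcal{P}_x(j\theta))=h^{2-i}(\mathcal{P}_x^{-1}(-j\theta))$. The dual $\mathcal{P}_x^{-1}=\mathcal{P}_{-x}$ also restricts nontrivially to $Y$, so the positive case applies and gives $h^0=h^1=0$ and $h^2=j^2$. As a cross-check, for $j<0$ the sequence can be run downward: $h^0(L_x\otimes\omega_Y^{j})=0$ for negative degree, and $h^0(\mathcal{P}_x(j\theta))=0$ since $\mathcal{P}_x(j\theta)$ restricts to negative degree on a covering family of translates of $Y$.

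Parts (1)--(3) follow from the two cases. The bundle is not Ulrich because, for instance, $h^2(\mathcal{P}_x(-\theta))=1\neq0$, whereas an Ulrich bundle requires $H^*(\mathcal{P}_x(-i\theta))=0$ for $1\le i\le 2$.

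\textbf{Main obstacle.} The hard step is justifying, on the possibly singular $\overline{J}(Y)$, three facts: the vanishing $H^*(\mathcal{P}_x)=0$, the identification $\mathcal{O}(Y)\cong\theta$ (with the adjunction giving $\omega_Y$), and the duality statement. I would take these from the results of \cite{BhP2} used earlier in the paper, namely those invoked for Theorem \ref{chomB} via the Fourier--Mukai transform. In those terms the hypothesis $\mathcal{P}_x\vert_Y\neq\mathcal{O}_Y$ is exactly $x\neq0$, i.e.\ the nonvanishing condition on $x^*$.
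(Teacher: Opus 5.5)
Your proposal is correct and follows essentially the paper's argument. Both proofs take the base case $j=0$ from the vanishing of $H^*(\mathcal{P}_x)$ (Theorem \ref{cohomPx}), use $\theta=Y$ with $\theta\vert_Y=\omega_Y$, and induct along the restriction sequences for $j\ge 1$; for $j\le 0$ you lead with Serre duality, which the paper mentions as its alternative, while the paper runs the descending induction you give as a cross-check. One small point: under the paper's Definition \ref{ACMbundles}, the direct reason $\mathcal{P}_x$ is not Ulrich is $h^0(\mathcal{P}_x)=0\neq 2=r\cdot\deg\overline{J}(Y)$, not $h^2(\mathcal{P}_x(-\theta))\neq 0$, though the conclusion holds either way.
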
   

For $g=2$, we have the following result for the twisted Picard bundles.   
\begin{theorem} (Theorem \ref{thm1})  
          Let $g =2$. Then the twisted Picard bundles  $V_{r, x}, r \ge 3, x^* \notin Y$ are ACM bundles for $x$ general in $J(Y)$. They are not Ulrich bundles. 
\end{theorem}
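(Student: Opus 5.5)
We need $h^1(\overline{J}(Y), V_{r,x}(j\theta))=0$ for all $j\in\mathbb{Z}$, with $\dim \overline{J}(Y)=2$ and $\theta$ the principal polarisation. Our main tool is the defining sequence of the Picard bundle, which lets us reduce everything to line bundles on $\overline{J}(Y)$ of the form $\mathcal{P}_z(j\theta)$; these are controlled by Theorem \ref{PxACMg2}.

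For $g=2$ and $r\ge 3$, $E_r$ has rank $r-1$. Choose $r-2$ general nonsingular points and use the standard filtration: $E_r$ sits in an exact sequence relating $E_{r-1}$ to the line bundle $\mathcal{L}_{p}$, where $\mathcal{L}_p=\mathcal{L}|_{\overline{J}(Y)\times p}$, a translate-type line bundle algebraically equivalent to $\mathcal{O}$. This gives
\[
0\to E_{r-1}\to E_r\to \mathcal{L}_p\to 0 .
\]
The base case is $E_3$ ($r=2g-1$), which has rank $2$. It also has the description via $\theta$: for $g=2$, $E_{2g-1}$ is an extension involving $\theta^{-1}$ and the dual. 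More precisely, Fourier--Mukai gives $E_3\cong \widehat{\ }$ of a line bundle of degree $3$ on $Y\subset \overline{J}$. Iterating, $V_{r,x}$ has a filtration whose graded pieces are $V_{3,x}$ and the line bundles $\mathcal{P}_{x+p_i}$. After twisting by $j\theta$, the long exact cohomology sequences reduce the vanishing of $h^1$ to two inputs:
\begin{itemize}
\item[(a)] $h^1(\mathcal{P}_{y}(j\theta))=0$ for the relevant $y$, which holds by Theorem \ref{PxACMg2} whenever $\mathcal{P}_y|_Y\ne\mathcal{O}_Y$;
\item[(b)] $h^1(V_{3,x}(j\theta))=0$.
\end{itemize}
Generality of $x$ together with the choice of the points $p_i$ ensures that condition (a) holds for all finitely many $y$ involved. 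The set of bad $x$ is a finite union of translates of $Y$ (the loci where $x^*\in Y$ or the shifted point lies on $Y$), which is a proper closed subset.

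For (b), we need a resolution of $E_3=E_{2g-1}$ itself. Take the sequence on $\overline{J}(Y)\times Y$,
\[
0\to \mathcal{L}(3t)\to\mathcal{L}(3t+D)\to \mathcal{L}(3t+D)|_{D}\to 0 ,
\]
with $D$ an effective divisor, and push forward. Alternatively, use Mukai's transform: $R^1p_*\mathcal{L}(t)\cong$ a sheaf supported on a divisor $W_1$, an image of $Y$. This gives
\[
0\to E_1\to\ldots ,
\]
and in particular $E_3$ fits into
\[
0\to E_3\to \oplus \mathcal{L}_{q_i}\to \ldots
\]
A cleaner route: $0\to E_2\to E_3\to \mathcal{L}_p\to 0$ with $E_2=p_*\mathcal{L}(2t)$ of rank $1$, and $R^1$ equal to $\mathcal{O}_{\Theta'}$-twisted, where $\Theta'$ is a translate of $\theta$ ($W_1\cong Y$). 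Thus $E_2$ is torsion free of rank one, $\cong \mathcal{P}_z(-\theta)$ up to translation, for $g=2$. Then $V_{3,x}(j\theta)$ is an extension of $\mathcal{P}_{x+p}(j\theta)$ by $\mathcal{P}_{x'}((j-1)\theta)$, and (b) again follows from Theorem \ref{PxACMg2}. \textbf{The main obstacle} is justifying this identification of $E_2$, including handling $R^1$ on the singular $\overline{J}(Y)$; I would use the exact sequence $0\to p_*\mathcal{L}(2t)\to p_*\mathcal{L}(3t)\to \mathcal{L}_t\to R^1p_*\mathcal{L}(2t)\to 0$ together with the fact that $R^1p_*\mathcal{L}(2t)$ is a line bundle on a theta translate.

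Finally, for non-Ulrichness we compare Hilbert polynomials. By Theorem \ref{PxACMg2}, $h^0(V_{r,x}(j\theta))$ is the sum of $j^2$-type terms, giving $\chi(V_{r,x}(j\theta))=(r-1)j^2+(\text{lower order})$, with $h^2\neq0$ for $j\ll0$. An Ulrich bundle of rank $r-1$ on $(\overline{J},\theta)$, where $\theta^2=2$, would need $h^0(V(-\theta))=0$ and $h^0(V)=2(r-1)$. Instead we get $h^0(V_{r,x})=\chi=c_2$-type count, namely $h^0=r-2$ from the graded pieces $\mathcal{P}_{y}$ (which have $h^0=0$) and the $\theta$ pieces. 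This contradicts $h^0=2(r-1)$.
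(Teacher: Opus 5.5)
\textbf{There is a genuine gap: the base case $r=3$, your step (b), rests on a false identification.}

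Your reduction from $r\ge 4$ down to $r=3$ is fine. For $r-1\ge 3$ one has $R^1p_{J*}\mathcal{L}((r-1)t)=0$, so $0\to E_{r-1}\to E_r\to\mathcal{L}_t\cong\mathcal{O}\to 0$ is exact. The new graded pieces of $V_{r,x}$ are line bundles $\mathcal{P}_y$, and Theorem \ref{PxACMg2} kills their $h^1$ in every twist for general $x$.

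The problem is $R^1p_{J*}\mathcal{L}(2t)$. Since $\deg L(2t)=2=2g-2$, duality gives $h^1(L(2t))\neq 0$ only when $L(2t)\cong\omega_Y$. So $R^1p_{J*}\mathcal{L}(2t)$ is a skyscraper at the single point $w=\omega_Y(-2t)$. It is not a sheaf on a theta translate; that description holds for $\mathcal{L}(t)$ and $W_1$. Hence the image of $E_3\to\mathcal{L}_t=\mathcal{O}$ is the ideal sheaf $I_w$, and the correct sequence is
\[
0\to \mathcal{P}_z(-\theta)\to E_3\to I_w\to 0 .
\]
It is not an extension of two line bundles. Chern classes confirm this. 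From $c(E_3)=e^{-\theta}$ (Poincar\'e/Mattuck formula) one gets $c_2(E_3)=\theta^2/2=1$, whereas your extension would have $c_2=0$. When $NS=\mathbb{Z}\theta$, no extension of line bundles can have $c_1=-\theta$ and $c_2=1$ at all, because $2a(-1-a)$ is even. Your description is also internally inconsistent: if $R^1$ were a line bundle on $\Theta'$, the quotient would be $\mathcal{L}_t(-\Theta')$ and $c_1(E_3)$ would be $-2\theta$.

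With the correct quotient, the piece-by-piece vanishing breaks down for $j\le 0$. There $h^0=h^1=0$ for $\mathcal{P}_x(j\theta)$, so $h^1(I_w\otimes\mathcal{P}_x(j\theta))=1$. Thus $h^1(V_{3,x}(j\theta))=0$ holds if and only if the connecting map
\[
H^1(I_w\otimes\mathcal{P}_x(j\theta))\to H^2(\mathcal{P}_z\otimes\mathcal{P}_x((j-1)\theta))
\]
is injective. That is a statement about the extension class, and your proposal never addresses it. One could obtain injectivity at $j=0$ from Theorem \ref{chomB} and propagate it by multiplying with sections nonvanishing at $w$, but that is an additional argument you have not given. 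On the non-normal $\overline{J}(Y)$ of a nodal curve, identifying the rank one kernel $E_2$ with a line bundle also needs justification.

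The paper avoids all of this, uniformly in $r$. For $g=2$ the divisor $\theta$ is $Y$ itself, and the paper uses
\[
0\to V_{r,x}(j\theta)\to V_{r,x}((j+1)\theta)\to V_{r,x}((j+1)\theta)|_Y\to 0
\]
together with:
\begin{enumerate}
\item $h^0(V_{r,x}(j\theta)|_Y)=0$ for $j\le 0$, since the restriction is stable of negative degree (Proposition \ref{VL1});
\item $h^1(V_{r,x}(j\theta)|_Y)=0$ for $j\ge 1$ and general $x$ (Proposition \ref{VL2});
\item $h^0=h^1=0$ for $V_{r,x}$ itself (Theorem \ref{chomB}).
\end{enumerate}
Induction in both directions from $j=0$ then gives $h^1(V_{r,x}(j\theta))=0$ for all $j$.

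Separately, your count $h^0(V_{r,x})=r-2$ in the non-Ulrich part is wrong. In fact $h^0(V_{r,x})=0$ by Theorem \ref{chomB}, consistent with $\chi(V_{r,x})=0$ since $ch(E_r)=(r-1)-\theta$. That vanishing, against the required $2(r-1)$, is exactly why $V_{r,x}$ is not Ulrich.
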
        

    We note that if $p_1, p_2$ are the projections from $\overline{J}(Y) \times {J}(Y)$ to the first and second factors, then $(p_1^*E_r) \otimes \mathcal{P}$ is the family of the stable bundles $V_{r,x}$ on $\overline{J}(Y)$ parametrised by $J(Y)$ whose generic members are ACM bundles ($g=2$). Some members in the family, including the Picard bundle, are not ACM bundles.
       
   It will be interesting to know if the results of these two theorems are true for higher genera as well.

        In the second part of the paper, we define an embedding $\alpha_Y$ of the (generalised) Jacobian $J(Y)$ in the moduli space $U^s_Y(n,d)$ of stable vector bundles on $Y$ using a twisted restriction $E_Y$ of a Picard bundle to $Y \subset J(Y)$ (see Subsection \ref{alphaembed} for details).  On smooth curves,  Yingchen Li had defined such an embedding using a stable vector bundle $E_0$ such that the  bundle of endomorphism $End (E_0)$  has no line bundle direct summands \cite{Li}. In \cite{Bh}, I had given  an embedding of the compactified Jacobian of a nodal curve into $U^s_Y(n,d)$ under the additional assumption that the pull back of $E_0$ to the normalisation $X$ is stable. The assumption on $End(E_0)$ is crucially used to prove the injectivity of $\alpha_Y$. We do not know if $E_Y$ has any direct summand which is a line bundle.  Moreover,  the pull back of $E_Y$ to the normalisation $X$ is not semistable.  Consequently the embedding $\alpha_Y$ does not extend to $\overline{J}(Y)$. For $Y$ smooth, we study the relation between the restriction of the theta divisor $\theta_U$ on $U^s_Y(n,d)$ to $J(Y)$ with the theta divisor $\theta$ on $J(Y)$. 
        
        We show that (under suitable conditions) the restriction of the universal bundle $\mathcal{U}$ to $Y \times J(Y)$ is stable for any polarisation of the form $a_1 H_1+ a_2 H_2$, where $a_1, a_2$ are positive integers,  $H_1$ and $H_2$ are any polarisations on $Y$ and $J(Y)$ respectively  (Proposition \ref{univbundleres}). 
        
      Assume that $Y$ is a smooth curve.  Let $r_Y$ and $d_Y$ be the rank and degree of $E_Y$. Let $b \ge 2g-1$ be an integer. Then we show that 
the restriction of the Picard bundle $E_{r_Y, d_Y+b r_Y}$ to $J(Y)$ is $\theta$-semistable for $b \ge 2g-1$ and $\theta$-stable for $b \ge 2g$ (Theorem  \ref{picardbundleres}).
      
                    After this paper was written, we learnt about a paper of Kota Yoshioka where he studied   aCM bundles on abelian surfaces of Picard number $1$. He determines necessary and sufficient conditions on the Mukai vector (i.e., on the rank, first Chern class and Euler characteristic of a bundle) for existence of an ACM bundle. He does not study explicitly twisted Picard bundles $V_{r,x}$.   Even in the smooth case, the Jacobian can have any Picard number between  $1$ and $4$, for examples see \cite[10.8 (7), page 312]{BL}, \cite{B}.  The compactified Jacobian is not an Abelian variety, it is not even normal.

\section{Preliminaries }

\subsection{Notation}  \hfill

      Let $Y$ denote an irreducible projective curve with at most nodes as singularities and defined over  an algebraically closed field of characteristic $0$. 
 Let $y_1, \cdots, y_k$ be the nodes of $Y$ and  $Y_0$ be the set of smooth points of $Y$.  
Let 
$$p : X \to Y$$ 
be the normalisation map.

      Let $g = g(Y) := h^1(Y, \mathcal{O}_Y)$ be the arithmetic genus of $Y$. 
Since $Y$ is a Gorenstein curve, it has a locally free dualising sheaf $\omega_Y$. 
Fix a base point $t \in Y$ different from all the  nodes 
$y_j, j=1,\cdots,k$ and sufficiently general. 

    For a vector bundle $E$ on $Y$, let $r(E)$ and $d(E)$ denote the rank and degree of $E$. Let $h^i(E) = $ dim $H^i(Y, E)$ for all $i$.

\subsection{The Compactified Jacobian}  \hfill   

          Let $J(Y)$ be the space of line bundles (locally free sheaves of rank $1$) of degree $0$ on $Y$. If $Y$ is nonsingular, it is called the Jacobian of $Y$, it is an Abelian variety. If $Y$ has nodes, $J(Y)$ is called the generalised Jacobian of $Y$, it is a non-singular quasi projective variety. Then the variety $J(Y)$ has a natural compactification 
$\overline {J}(Y)$ as the variety of torsion-free sheaves of rank $1$ and  
degree $0$ on $Y$. The variety $\overline {J}(Y)$ is not normal, it is a seminormal projective variety. 

              There is a canonically defined ample theta divisor $\theta$ on $\overline{J}(Y)$ (using $t$).   

                   There is  the Abel-Jacobi embedding 
           $$Y  \ \hookrightarrow  \ \overline{J}(Y)$$ 
given by $x \mapsto I^*_x \otimes \mathcal{O}_Y (-t)$, where $I_x$ denotes the ideal sheaf of $x \in Y$. We shall identify $Y$ with its image in  $\overline{J}(Y)$ under this embedding.

\subsection{Picard sheaves and twisted Picard bundles on $\overline{J}(Y)$} \hfill  \label{picardsheaves}
    
    There is a Poincar\'e sheaf (universal sheaf) ${\mathcal L}$ on $\overline{J}(Y)
\times Y$ such that for a point $L \in \overline{J}(Y)$, one has ${\mathcal L} \vert_{L\times Y}$ is isomorphic to the torsionfree sheaf $L$ of rank $1$  corresponding to the point $L$. The sheaf 
${\mathcal L}$ is unique up to tensoring by a line bundle on $\overline{J}(Y)$. Hence we normalise ${\mathcal L}$ by the condition that ${\mathcal L}\vert _{\overline{J}(Y)\times t} \cong {\mathcal O}_{\overline{J}(Y)}$, the trivial line bundle on $\overline{J}(Y)$. 
Let 
$p_J: \overline{J}(Y)\times Y \to \overline{J}(Y) \ {\rm and} \  
{p_Y}:\overline{J}(Y) \times Y \to Y$ 
denote the projections. Let 
$${\mathcal L}(r) = \mathcal{L} \otimes p_Y^*{\mathcal O}_{Y}(r t)\, .$$ 

\begin{definition}
We define the {\bf Picard sheaves} on $\overline{J}(Y)$ as the direct image sheaves 
$$ E_{r} := {p_J}_*({\mathcal L}(r)). $$
       For $r \ge 2g-1$,  the Picard sheaf is a vector bundle of rank $r+1-g$ on $\overline{J}(Y)$ called  a {\bf Picard bundle}.
\end{definition}
       
  Henceforth we assume that $r \geq 2g -1$. 

The Poincar\'e sheaf  ${\mathcal L}$ on $\overline{J}(Y) \times Y$ is the restriction of the Poincar\'e 
 sheaf $\mathcal{P}$ on  $\overline{J}(Y) \times \overline{J}(Y)$. This defines a Fourier-Mukai transform $\overline{J}(Y) \to \overline{J}(Y)$ which is an isomorphism (\cite{Mu} for $Y$ smooth, \cite{BhS} for $Y$ nodal). 

       For $x \in \overline{J}(Y)$, let $x^*$ denote the Fourier-Mukai transform of $x$. Let $i_x: \overline{J}(Y)  \hookrightarrow \overline{J}(Y) \times \overline{J}(Y), i_x(L) =  (L, x)$ be the inclusion. 
Define {\bf the restriction of the Poincar\'e bundle}  
\begin{equation} \label{Px}
\mathcal{P}_x : = i_x^* \mathcal{P}\, .
\end{equation}
The cohomology of $P_x$ is known.
\begin{theorem} \label{cohomPx} 
(\cite[Theorem B]{A} for $Y$ with planar singularities; \cite{Mu} for $Y$ smooth).\\
(1) If $ x \in J(Y), x \neq \mathcal{O}_Y$,
 $$H^i(\overline{J}(Y), P_x) = 0 \  \forall \ i \, .$$
(2) For $x = \mathcal{O}_Y$, we have $P_x = \mathcal{O}_{\overline{J}_Y}$.  Then $h^0(\overline{J}(Y), \mathcal{O}_{\overline{J}_Y}) = 1$ and for $1\leq i \leq g$\ ,   
$$H^i(\overline{J}(Y), \mathcal{O}_{\overline{J}_Y}) \ \cong \ \bigwedge^i H^1(Y, \mathcal{O}_Y) \, $$
so that
$$h^i(\overline{J}(Y), \mathcal{O}_{\overline{J}_Y})= \binom{g-1}{i-1} \, .$$
\end{theorem}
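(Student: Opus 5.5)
The plan is to deduce both parts from the Fourier--Mukai statement underlying the cited results, namely
\[
R{p_2}_*\mathcal{P} \;\cong\; k(\mathcal{O}_Y)[-g]
\]
on $\overline{J}(Y)\times\overline{J}(Y)$, where $p_2$ is the second projection. This is Mukai's theorem \cite{Mu} for $Y$ smooth and Arinkin's \cite[Theorem B]{A} for planar (in particular nodal) singularities. It is the cohomological form of the fact, quoted above, that the Fourier--Mukai transform defined by $\mathcal{P}$ is an equivalence (\cite{Mu}, \cite{BhS}). The complex $R{p_2}_*\mathcal{P}$ is supported at the single point $\mathcal{O}_Y$. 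Since $\mathcal{P}$ is flat over the second factor, cohomology and base change at a point $x\in J(Y)$, $x\neq\mathcal{O}_Y$, shows that $R\Gamma(\overline{J}(Y),\mathcal{P}_x)$ is the derived fibre of a complex that vanishes near $x$. Hence $H^i(\overline{J}(Y),\mathcal{P}_x)=0$ for all $i$, which is (1).

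In the smooth case there is a more elementary route to (1). For $x\neq 0$, $\mathcal{P}_x$ is a nontrivial line bundle in $\mathrm{Pic}^0$ of the abelian variety $J(Y)$. Mumford's argument then applies: $m^*\mathcal{P}_x\cong p_1^*\mathcal{P}_x\otimes p_2^*\mathcal{P}_x$, and by Künneth, if $H^\bullet(\mathcal{P}_x)$ were nonzero its lowest degree piece would inject into a tensor square that vanishes in that degree. For nodal $Y$ one can try the analogous argument with the action of the group $J(Y)$ on $\overline{J}(Y)$ by tensor product: $\mathcal{P}_x$ is $J(Y)$-linearised up to the character determined by $x$. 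However, $J(Y)$ is then a semi-abelian group rather than an abelian variety, so I would rely on Arinkin's theorem there.

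For (2), note first that $\mathcal{P}_{\mathcal{O}_Y}=\mathcal{O}_{\overline{J}(Y)}$ by the normalisation of $\mathcal{P}$, and that $h^0=1$ because $\overline{J}(Y)$ is integral and projective. For the higher cohomology I would compare with a symmetric product. For $n\gg 0$, the Abel map $\mathrm{Hilb}^n(Y)\to\overline{J}^n(Y)\cong\overline{J}(Y)$ is a projective bundle; this holds for planar singularities by Altman--Kleiman, and for smooth $Y$ it is $\mathrm{Sym}^n Y\to J$. Hence $H^i(\mathcal{O}_{\overline{J}})\cong H^i(\mathcal{O}_{\mathrm{Hilb}^n Y})$. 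Cup product with the classes coming from $H^1(Y,\mathcal{O}_Y)=T_0J(Y)$ gives a map $\bigwedge^i H^1(Y,\mathcal{O}_Y)\to H^i(\overline{J}(Y),\mathcal{O})$. To show it is an isomorphism, I would use the base-change spectral sequence at the point $\mathcal{O}_Y$. Here $R{p_2}_*\mathcal{P}=k(0)[-g]$ is computed via the Koszul resolution of the skyscraper on the smooth germ of $J(Y)$ at $0$, and this makes the derived fibre equal to $\bigoplus_i\bigwedge^i k^g[-i]$. In the smooth case this is Mumford's classical computation.

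The main obstacle is the identification in the trivial case for singular $Y$, since $\overline{J}(Y)$ is not normal and one cannot appeal to Hodge theory. Here I would lean entirely on Arinkin's result and the Koszul computation. I would also double-check the dimension count stated in the theorem, because $\dim\bigwedge^i H^1(Y,\mathcal{O}_Y)=\binom{g}{i}$ rather than $\binom{g-1}{i-1}$, so the displayed binomial appears to need correction.
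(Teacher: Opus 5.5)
Your proposal is correct and essentially matches the paper, which gives no argument of its own beyond citing Mukai and Arinkin. Deriving both parts from $Rp_{2*}\mathcal{P}\cong k(\mathcal{O}_Y)[-g]$ by cohomology and base change, with the Koszul complex at the smooth point $\mathcal{O}_Y\in J(Y)$ giving $H^i(\overline{J}(Y),\mathcal{O})\cong\bigwedge^{g-i}(\mathfrak{m}/\mathfrak{m}^2)$ ($\mathfrak{m}$ the maximal ideal there), is just an unpacking of those cited results; it already gives the stated abstract isomorphism, so your Hilbert-scheme and cup-product detour is not needed. You are also right about the count: $H^i\cong\bigwedge^i H^1(Y,\mathcal{O}_Y)$ forces $h^i=\binom{g}{i}$ (e.g.\ $h^1=g$), and the displayed $\binom{g-1}{i-1}$ is the count from Theorem~\ref{chomB} for $V_{r,x}$ with $x^*\in Y$, apparently carried over by mistake; only part (1) is used later in the paper, so nothing downstream is affected.
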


\begin{definition} \label{Vrx}
For $r \ge 2g - 1$ and $x \in J(Y)$, define the vector bundles 
$$V_{r,x} := E_r \otimes \mathcal{P}_x\, .$$
We call $V_{r,x}$ a {\bf twisted Picard bundle}. 
\end{definition}
                 
\begin{theorem} \label{chomB} (\cite[Corollary 2.13]{BhS}; \cite{Mu} for $Y$ smooth).
Let $r > g-1$.
 $$h^0(\overline{J}(Y), V_{r,x}) = 0 \  \forall \ x \in J(Y)\, .$$
 For $1\leq i \leq g$ and $x \in J(Y)$, we have 

\[ h^i(\overline{J}(Y), V_{r,x})=
\begin{cases}
 \binom{g-1}{i-1} &  \mbox{if } x^*\in Y\\ 
 0 &   \mbox{if } x^*\notin Y. 
\end{cases}
\]

\end{theorem}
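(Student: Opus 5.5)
The statement is Theorem \ref{chomB}, which the paper cites from \cite[Corollary 2.13]{BhS} and \cite{Mu}. I would reprove it by the projection formula, reducing the cohomology on $\overline{J}(Y)$ to a relative computation over $Y$ governed by Theorem \ref{cohomPx}.

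\textbf{Step 1 (pull back to the product).} Since $r \ge 2g-1 > 2g-2$, every torsion-free rank one sheaf $L(rt)$ of degree $r$ satisfies $H^1(Y,L(rt))=0$. Hence $R^1p_{J*}\mathcal{L}(r)=0$ and $E_r=p_{J*}\mathcal{L}(r)$ is locally free, with its formation commuting with base change. Therefore $R p_{J*}\mathcal{L}(r) = E_r$ in the derived category. The projection formula gives
\begin{equation*}
H^i(\overline{J}(Y), V_{r,x}) \cong H^i\bigl(\overline{J}(Y)\times Y,\ \mathcal{L}(r)\otimes p_J^*\mathcal{P}_x\bigr).
\end{equation*}

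\textbf{Step 2 (push forward to $Y$).} Next I compute the same group through $Rp_{Y*}$. For a point $y\in Y$, the fibre of $\mathcal{L}$ over $\overline{J}(Y)\times y$ is $\mathcal{P}_{y}$, using $\mathcal{L}=\mathcal{P}|_{\overline{J}(Y)\times Y}$ and the symmetry of $\mathcal{P}$. The fibre of $\mathcal{L}(r)\otimes p_J^*\mathcal{P}_x$ over $y$ is then $\mathcal{P}_y\otimes\mathcal{P}_x\cong\mathcal{P}_{x\otimes y}$, by the theorem of the square for $\mathcal{P}$ in the first variable, valid since $x\in J(Y)$. By Theorem \ref{cohomPx}(1), all cohomology of this fibre vanishes unless $x\otimes y$ is trivial, i.e.\ unless $y$ corresponds to $x^{-1}$. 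Unwinding the normalisations, this is the condition $x^*\in Y$ up to the sign convention in the Fourier--Mukai identification.

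\textbf{Step 3 (the two cases).} If $x^*\notin Y$, then every fibre is acyclic. Cohomology and base change gives $Rp_{Y*}=0$, so all $h^i$ vanish.

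If $x^*\in Y$, let $y_0$ be the corresponding point, which is a smooth point of $Y$ by the embedding. The complex $Rp_{Y*}(\mathcal{L}\otimes p_J^*\mathcal{P}_x)$ is supported at $y_0$. Mukai's argument, or the Fourier--Mukai equivalence of \cite{BhS}, identifies it with the skyscraper $k(y_0)[-g]$: the transform of $\mathcal{P}_{x}$ is a shifted point sheaf. The twist by $\mathcal{O}_Y(rt)$ does not affect a skyscraper. Hence $H^i(\overline{J}(Y)\times Y,\cdot)$ is one-dimensional in degree $g$ only.

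This naive count conflicts with the claimed $\binom{g-1}{i-1}$. The discrepancy comes from restricting to $Y$ rather than to all of $\overline{J}(Y)$: the restriction of the transform to the curve $Y$ through $y_0$ is the derived restriction $Li_Y^*k(y_0)[-g]$. Its Tor sheaves are $\bigwedge^{j} N^\vee_{Y}$ at $y_0$, where the conormal space of $Y\subset\overline{J}(Y)$ at the smooth point $y_0$ has dimension $g-1$. This produces cohomology of dimension $\binom{g-1}{j}$ in degree $g-j$, which after reindexing gives $\binom{g-1}{i-1}$ in degree $i$ for $1\le i\le g$, and $h^0=0$.

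\textbf{Main obstacle.} The delicate step is justifying this Koszul computation. It requires showing that $Y$ meets the support transversally, so that the Tor sheaves are exterior powers of the conormal bundle. It also requires handling base change on the non-normal space $\overline{J}(Y)$, which I would do by invoking the flatness of $\mathcal{P}$ and the equivalence of \cite{BhS}.
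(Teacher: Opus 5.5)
The paper gives no proof of this statement; it only quotes \cite[Corollary 2.13]{BhS} and \cite{Mu}. The route you end up with is the standard Fourier--Mukai argument, and it does give the stated numbers: the transform of $\mathcal{P}_x$ is a shifted skyscraper on $\overline{J}(Y)$, you pull it back to $Y$, and you count Tor sheaves with a Koszul complex. However, Step 3 as written asserts a false identification and then argues against it, so the write-up needs restructuring.

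Let $\iota\colon Y\hookrightarrow\overline{J}(Y)$ be the Abel--Jacobi embedding, and put $\Phi(\mathcal{P}_x):=Rp_{2*}(\mathcal{P}\otimes p_1^*\mathcal{P}_x)$ on $\overline{J}(Y)\times\overline{J}(Y)$. Since $p_2$ is flat and $\mathcal{P}$ is flat over the second factor, base change along $\iota$ gives
\[
Rp_{Y*}\bigl(\mathcal{L}\otimes p_J^*\mathcal{P}_x\bigr)\cong L\iota^*\Phi(\mathcal{P}_x).
\]
It is $\Phi(\mathcal{P}_x)$, not $Rp_{Y*}(\mathcal{L}\otimes p_J^*\mathcal{P}_x)$, that is a shifted point sheaf $k(z_0)[-g]$ with $z_0=x^{-1}$. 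For nodal $Y$ this rests on Arinkin's computation of $Rp_{2*}\mathcal{P}$ as the skyscraper at $\mathcal{O}_Y$ in degree $g$, together with the compatibility of $\mathcal{P}$ with translation by $J(Y)$; Theorem \ref{cohomPx}(1) alone does not give it. So your claim $Rp_{Y*}(\cdots)\cong k(y_0)[-g]$ is false for $g\ge 2$ and should be deleted rather than reconciled. This base-change formulation also makes Step 2 unnecessary. That matters, because Step 2 as written is incomplete at a node $y$: there $x\otimes\iota(y)$ is not locally free, so neither Theorem \ref{cohomPx}(1) nor your theorem-of-the-square step applies to $\mathcal{P}_{x\otimes\iota(y)}$.

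Your ``main obstacle'' is misidentified. No transversality is needed. In fact, if $Y$ were Tor-independent of the point $z_0$, you would get exactly the naive count you rejected. What you need is that $\iota$ is a regular embedding of codimension $g-1$ at $z_0$, and this is automatic:

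\begin{itemize}
\item Since $z_0$ is a line bundle, $z_0=\iota(y_0)$ with $y_0$ a smooth point of $Y$, and $z_0$ lies in the smooth open subset $J(Y)$.
\item Near $z_0$, $\mathcal{O}_Y$ is therefore resolved by a Koszul complex on $g-1$ equations lying in $\mathfrak{m}_{z_0}$. Hence $\mathcal{T}or_j(\mathcal{O}_Y,k(z_0))\cong\bigwedge^j N^\vee_{y_0}$, which has dimension $\binom{g-1}{j}$.
\item All cohomology sheaves of $L\iota^*k(z_0)[-g]$ are skyscrapers, so the hypercohomology spectral sequence degenerates. This gives $h^{g-j}=\binom{g-1}{j}$, i.e.\ $h^i=\binom{g-1}{i-1}$ for $1\le i\le g$ and $h^0=0$.
\end{itemize}

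The non-normality of $\overline{J}(Y)$ plays no role in this local computation. It enters only through the flatness of $\mathcal{P}$ and the identification of $\Phi(\mathcal{P}_x)$, and that is where the citation does the real work.
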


\subsection{Some general results}  \hfill

\begin{lemma} \label{L'0}
Let $Z$ be a variety of dimension $g$ with the dualising sheaf the trivial line bundle and an ample line bundle $H$. Let $E$ denote a reflexive torsionfree sheaf on $Z$. Let $i$ be an integer, $0\le  i \le  g$. Then
  $H^i(Z, E) = 0$ if and only if $H^{g-i}(Z, E^*) = 0$.
\end{lemma}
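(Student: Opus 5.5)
The plan is to derive the equivalence from Serre duality on $Z$, with the dualising sheaf trivial, and then to use reflexivity to make the statement symmetric in $E$ and $E^*$. The ample line bundle $H$ is only there to make $Z$ projective, so that Serre duality applies. Throughout I take $Z$ to be Cohen--Macaulay; this is implicit in "dualising sheaf the trivial line bundle" and holds for $\overline{J}(Y)$, since $Y$ has planar singularities.

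\emph{Step 1 (duality).} For any coherent sheaf $F$ on the projective Cohen--Macaulay variety $Z$ of dimension $g$, Serre--Grothendieck duality gives
$$H^i(Z,F)^\vee \cong \mathrm{Ext}^{g-i}(F,\omega_Z)=\mathrm{Ext}^{g-i}(F,\mathcal{O}_Z).$$
Applied to $F=E$, this says that $H^i(Z,E)=0$ if and only if $\mathrm{Ext}^{g-i}(E,\mathcal{O}_Z)=0$.

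\emph{Step 2 (from global Ext to cohomology of $E^*$).} I would use the local-to-global spectral sequence
$$E_2^{p,q}=H^p\bigl(Z,\mathcal{E}xt^q(E,\mathcal{O}_Z)\bigr)\Longrightarrow \mathrm{Ext}^{p+q}(E,\mathcal{O}_Z).$$
Its bottom row is $E_2^{p,0}=H^p(Z,E^*)$, because $\mathcal{H}om(E,\mathcal{O}_Z)=E^*$.
\begin{itemize}
\item On the open set $U$ where $E$ is locally free, the sheaves $\mathcal{E}xt^q(E,\mathcal{O}_Z)$ vanish for $q\ge 1$.
\item Hence for $q\ge1$ these sheaves are supported on $Z\setminus U$. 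Since $E$ is reflexive, $Z\setminus U$ has codimension at least $2$ in the regions where $Z$ is normal. The aim is to show that these contributions do not interfere with the edge map $H^{g-i}(Z,E^*)\to \mathrm{Ext}^{g-i}(E,\mathcal{O}_Z)$ in the relevant degree, so that the two vanishings are equivalent.
\item The first thing I would try is to exploit reflexivity of $E$ and the Gorenstein property of $Z$. Locally, a reflexive module over a Gorenstein ring is a second syzygy. This should allow a depth and dimension argument that kills the $E_2^{p,q}$ with $q\ge1$ that could hit the diagonal $p+q=g-i$.
\end{itemize}

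\emph{Step 3 (symmetry).} Since $E$ is reflexive, $E\cong E^{**}$. The sheaf $E^*$ is also reflexive, so Steps 1--2 applied to $E^*$ give: $H^{g-i}(Z,E^*)=0$ if and only if $H^{i}(Z,E^{**})=H^i(Z,E)=0$. The two implications of the lemma are therefore the same statement applied to $E$ and to $E^*$, which yields the "if and only if".

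\emph{Main obstacle.} Step 2 is where I expect the real difficulty. For a locally free $E$ the spectral sequence degenerates and one gets $\mathrm{Ext}^{g-i}(E,\mathcal{O}_Z)=H^{g-i}(Z,E^*)$ directly. For a merely reflexive sheaf on the non-normal variety $\overline{J}(Y)$, the higher $\mathcal{E}xt$ sheaves need not vanish, and controlling them is the crux. If the depth argument fails, my fallback is to argue only the direction actually needed in the applications, where $E$ is locally free. For bundles of the form $V_{r,x}(j\theta)$ and $\mathcal{P}_x(j\theta)$ the lemma then reduces to Step 1 together with $\mathcal{H}om(E,\mathcal{O}_Z)=E^*$.
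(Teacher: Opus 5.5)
\textbf{The gap is Step 2, and no depth argument can close it.} For reflexive sheaves that are not locally free, the lemma is false as soon as $g\ge 3$.

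Reflexivity over a Gorenstein base gives only $\operatorname{codim}\operatorname{Supp}\mathcal{E}xt^q(E,\mathcal{O}_Z)\ge q+2$. This still allows the differential $d_2\colon H^{g-3}(Z,\mathcal{E}xt^1(E,\mathcal{O}_Z))\to H^{g-1}(Z,E^*)$ to be nonzero. Hence $\mathrm{Ext}^{g-1}(E,\mathcal{O}_Z)=0$ does not force $H^{g-1}(Z,E^*)=0$.

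Here is a concrete counterexample.
\begin{itemize}
\item Let $Z=A$ be an abelian threefold and $L$ a very ample line bundle. Choose $s_1,s_2,s_3\in H^0(A,L)$ with finite nonempty common zero scheme $\Sigma$.
\item Set $E=\ker\bigl(\mathcal{O}_A^{\oplus 3}\xrightarrow{(s_1,s_2,s_3)}L\bigr)$. This is reflexive of rank $2$, being a kernel into a torsion-free sheaf, and $\mathcal{E}xt^1(E,\mathcal{O}_A)\cong\mathcal{O}_\Sigma\neq 0$.
\item The Koszul complex gives exact sequences $0\to L^{-2}\to (L^{-1})^{\oplus 3}\to E\to 0$ and $0\to L^{-1}\to \mathcal{O}_A^{\oplus 3}\to E^*\to 0$.
\item Since $H^k(A,L^{-m})=0$ for $k<3$ and $m\ge 1$, the first sequence gives $H^1(A,E)=0$.
\item The second sequence gives an injection $H^2(A,\mathcal{O}_A)^{\oplus 3}\hookrightarrow H^2(A,E^*)$, so $h^2(A,E^*)\ge 9$.
\end{itemize}
This contradicts the lemma for $g=3$, $i=1$. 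In the local-to-global spectral sequence, the map $d_2\colon H^0(\mathcal{O}_\Sigma)\to H^2(A,E^*)$ is exactly what absorbs $H^2(A,E^*)$ and kills $\mathrm{Ext}^2(E,\mathcal{O}_A)\cong H^1(A,E)^\vee$.

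\textbf{Your fallback is the correct repair, and it is precisely the paper's proof.} The paper writes $h^{g-i}(Z,E^*)=h^i(Z,\omega_Z\otimes E^{**})$. That is Serre duality in the form valid for locally free sheaves on a Cohen--Macaulay projective variety; the Cohen--Macaulay condition is your added hypothesis, and $\overline{J}(Y)$ satisfies it. The paper then uses $E^{**}=E$ and $\omega_Z\cong\mathcal{O}_Z$, and never addresses the non-locally-free case.

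The lemma is only invoked for the vector bundles $V_{r,x}$ with $r\ge 2g-1$, and their twists and duals, in Corollary \ref{C'1} and Lemma \ref{L'1}. So the locally free version is all the paper needs.

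For merely reflexive $E$, only the following cases survive:
\begin{itemize}
\item The cases $i\in\{0,g\}$, which follow from $H^g(Z,F)^\vee\cong\mathrm{Hom}(F,\omega_Z)$.
\item The case $g=2$ on a Gorenstein surface. There reflexive sheaves are maximal Cohen--Macaulay, so $\mathcal{E}xt^q(E,\mathcal{O}_Z)=0$ for $q\ge 1$.
\end{itemize}
You should therefore restate the lemma for locally free $E$ on a Cohen--Macaulay $Z$ and drop Step 2 entirely.
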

\begin{proof}
   Suppose that $H^i(Z, E) = 0$. By Serre duality, $h^{g-i}(Z, E^*) = h^i(Z, \omega_Z \otimes  (E^*)^*)$. Since $E^{* *} = E$ and $\omega_Z$ is trivial, one has $h^{g-i}(Z, E^*) = h^i(Z, E) = 0$.  
\end{proof}

\begin{corollary} \label{C'1}
Let $r \ge 2g-1$. Then
 $$h^g(\overline{J}(Y), V^*_{r,x}) = 0 \  \forall \ x \in J(Y)\, .$$
 For $0\leq i \leq g-1$ and $x \in J(Y)$, we have 

\[ h^{i}(\overline{J}(Y), V^*_{r,x})=
\begin{cases}
 \binom{g-1}{g-1-i} &  \mbox{if } x^*\in Y\\ 
 0 &   \mbox{if } x^*\notin Y. 
\end{cases}
\]

\end{corollary}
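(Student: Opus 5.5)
The plan is to deduce Corollary \ref{C'1} directly from Theorem \ref{chomB} by Serre duality on $Z=\overline{J}(Y)$, in the form of Lemma \ref{L'0}, but keeping track of dimensions rather than only vanishing.

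\textbf{Inputs.} First I would check that the hypotheses of Lemma \ref{L'0} hold.

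\begin{itemize}
\item $\overline{J}(Y)$ is a projective variety of dimension $g$.
\item For $Y$ nodal, $\overline{J}(Y)$ is Gorenstein with trivial dualising sheaf. For $Y$ smooth, $\omega_{J(Y)}\cong\mathcal{O}$ since $J(Y)$ is an abelian variety. In the nodal case this is a known fact (\cite{A}, \cite{BhS}), which I would cite.
\item $\theta$ is ample.
\item $V_{r,x}=E_r\otimes\mathcal{P}_x$ is locally free for $r\ge 2g-1$, since $E_r$ is a Picard bundle and $\mathcal{P}_x$ is a line bundle for $x\in J(Y)$. Hence it is reflexive, with $V_{r,x}^{**}=V_{r,x}$.
\end{itemize}

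\textbf{The duality step.} Since $\omega_{\overline{J}(Y)}$ is trivial and $V_{r,x}$ is locally free, Serre duality on the Cohen–Macaulay variety $\overline{J}(Y)$ gives an isomorphism
$$H^{i}(\overline{J}(Y),V^*_{r,x})\cong H^{g-i}(\overline{J}(Y),V_{r,x})^*$$
for all $0\le i\le g$. This is exactly the computation in the proof of Lemma \ref{L'0}, now read as an equality of dimensions rather than just a vanishing statement. So $h^i(V^*_{r,x})=h^{g-i}(V_{r,x})$.

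\textbf{Reading off the values.} Now substitute Theorem \ref{chomB}, whose hypothesis $r>g-1$ holds because $r\ge 2g-1$.

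\begin{itemize}
\item For $i=g$: $h^g(V^*_{r,x})=h^0(V_{r,x})=0$ for every $x\in J(Y)$.
\item For $0\le i\le g-1$: set $j=g-i$, so $1\le j\le g$. Then $h^i(V^*_{r,x})=h^{j}(V_{r,x})$. This equals $\binom{g-1}{j-1}=\binom{g-1}{g-1-i}$ if $x^*\in Y$, and $0$ if $x^*\notin Y$.
\end{itemize}

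These are exactly the values claimed in the corollary.

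\textbf{Main obstacle.} The only non-formal point is justifying Serre duality with trivial dualising sheaf on the singular, non-normal variety $\overline{J}(Y)$. The earlier results take this for granted: it is the hypothesis of Lemma \ref{L'0}, and it underlies Theorem \ref{cohomPx}. I would rely on the standard facts that $\overline{J}(Y)$ of a nodal curve is Gorenstein with $\omega\cong\mathcal{O}$, and that Serre duality holds for locally free sheaves on a projective Cohen–Macaulay variety. Once this is granted, the rest is bookkeeping of indices.
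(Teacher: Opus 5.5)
Your proposal is correct and follows the paper's proof. The paper likewise combines Theorem \ref{chomB} with the Serre duality of Lemma \ref{L'0}, using that $V_{r,x}$ is locally free for $r\ge 2g-1$. You rightly note that the nonzero values need the dimension equality $h^i(V^*_{r,x})=h^{g-i}(V_{r,x})$ from the proof of Lemma \ref{L'0}, not just its vanishing statement, and your index bookkeeping $\binom{g-1}{(g-i)-1}=\binom{g-1}{g-1-i}$ is right.
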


\begin{proof} 
For $r \ge 2g -1$, $V_{r,x}$ is a vector bundle. The corollary follows from Theorem \ref{chomB} and Lemma \ref{L'0}.
\end{proof}

Write $V_{r,x}(j \theta)) := V_{r,x} \otimes  \theta^{\otimes j}$.

\begin{lemma} \label{L'1}
Assume that $x^*\notin Y$.
\begin{enumerate}
\item $H^0(\overline{J}(Y), V_{r,x}(j \theta)) = 0 \ \forall j \le 0\ ; \ H^g(\overline{J}(Y), V^*_{r,x}(j \theta)) = 0 \ \forall j \ge 0\ $.
\item $H^g(\overline{J}(Y), V_{r,x}(j \theta)) = 0 \ \forall j \ge 0\ ; \ H^0(\overline{J}(Y), V^*_{r,x}(j \theta)) = 0 \ \forall j \le 0\ .$
\end{enumerate}
\end{lemma}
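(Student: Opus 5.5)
Under the hypothesis $x^*\notin Y$, the plan is to get the extreme cases $j=0$ directly from Theorem \ref{chomB} and Corollary \ref{C'1}, and then move in $j$ by restricting to a theta divisor. Since $\theta$ is ample it is effective; choose an effective divisor $\Theta$ in the linear system of $\theta$. For any vector bundle $F$ on $\overline{J}(Y)$ we have the exact sequence
\[
0 \to F((j-1)\theta) \to F(j\theta) \to F(j\theta)\vert_\Theta \to 0 .
\]
For $j\le 0$, the line bundle $\theta^{j}$ restricted to the positive-dimensional, reduced and irreducible (or at least connected, reduced) divisor $\Theta$ is anti-ample. Hence a section of $F(j\theta)$ that vanishes on $\Theta$ lies in $F((j-1)\theta)$.

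For part (1), first statement, I will argue by descending induction on $j$. At $j=0$, Theorem \ref{chomB} gives $h^0(V_{r,x})=0$. More directly, for $j<0$ the multiplication by the section $s_\Theta^{-j}$ of $\theta^{-j}$ gives an injection $V_{r,x}(j\theta)\hookrightarrow V_{r,x}$. This induces an injection on $H^0$, so $H^0(V_{r,x}(j\theta))\hookrightarrow H^0(V_{r,x})=0$. This avoids any irreducibility issue, since tensoring the injective map $\mathcal{O}\to\theta^{-j}$ with a locally free sheaf keeps it injective. In the same way, Corollary \ref{C'1} (case $x^*\notin Y$, $i=0$) gives $h^0(V^*_{r,x})=0$, and hence $H^0(V^*_{r,x}(j\theta))=0$ for $j\le 0$. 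That is the first statement of part (2).

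The two remaining statements, concerning $H^g$ for $j\ge 0$, follow by Serre duality. Note that $\omega_{\overline{J}(Y)}$ is trivial; this is the hypothesis of Lemma \ref{L'0}, which the paper uses for $\overline{J}(Y)$ in Corollary \ref{C'1}. Since $V_{r,x}$ is locally free, duality gives
\[
h^g(V_{r,x}(j\theta)) = h^0\bigl(V^*_{r,x}(-j\theta)\bigr)
\qquad\text{and}\qquad
h^g(V^*_{r,x}(j\theta)) = h^0\bigl(V_{r,x}(-j\theta)\bigr),
\]
and both right-hand sides vanish for $j\ge0$ by the previous paragraph. Equivalently, one applies Lemma \ref{L'0} to the locally free sheaf $E=V_{r,x}(j\theta)$ with $i=g$.

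The only point needing care is justifying Serre duality on the singular, non-normal variety $\overline{J}(Y)$. I will use that it is Gorenstein (a local complete intersection for nodal $Y$) with trivial dualising sheaf, exactly as already invoked for Corollary \ref{C'1}. Duality for locally free sheaves then holds in the form used above.
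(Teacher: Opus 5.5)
Your argument is correct and is essentially the paper's proof: for $j\le 0$ the $H^0$ vanishings come from $H^0(V_{r,x}(j\theta))\subset H^0(V_{r,x})=0$ and $H^0(V^*_{r,x}(j\theta))\subset H^0(V^*_{r,x})=0$ (Theorem \ref{chomB}, Corollary \ref{C'1}), and the $H^g$ vanishings for $j\ge 0$ then follow by Serre duality with trivial dualising sheaf, i.e.\ Lemma \ref{L'0}. The opening remarks about restricting to $\Theta$ and descending induction are unnecessary; also, the effectivity of $\theta$ (which makes the multiplication map injective) comes from $\theta$ being the theta divisor, an effective Cartier divisor, not from ampleness alone.
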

\begin{proof}
(1) For $j \le 0$, one has $H^0(\overline{J}(Y), V_{r,x}(j \theta)) \subset H^0(\overline{J}(Y), V_{r,x}) = 0$ by Theorem \ref{chomB}. The second part of (1) follows from this by Lemma \ref{L'0}.\\
(2) By Corollary \ref{C'1}, $H^0(\overline{J}(Y), V^*_{r,x}) = 0$. Hence $H^0(\overline{J}(Y), V^*_{r,x}(j\theta)) 
\subset H^0(\overline{J}(Y), V^*_{r,x}) = 0 \ \forall j \le 0\ .$ Then $H^g(\overline{J}(Y), V_{r,x}(j \theta)) = 0 \ \forall j \ge 0$ by Lemma \ref{L'0}.
\end{proof}

\section{Restriction of the twisted Picard bundle to $Y \subset \overline{J}(Y)$} \hfill

Let $r \ge 2g -1$, let $L$ be a line bundle of degree $r-1$. Using $L$, one can give an embedding of $Y$ in $\overline{J}^r(Y)$ and hence in $\overline{J}(Y)$ using $t$ (see \cite{EL} in case $Y$ smooth, \cite{BhP2} in nodal case). For $L, L_1 \in \overline{J}^{r-1}$, the embeddings given by $L$ and $L_1$ are translates of each other. 
 There exists an exact sequence (see \cite[Equation (1.4)]{EL}, \cite[Equation (8.7)]{BhP2})
\begin{equation}\label{d4a} 
0 \to H^0(Y, L)\otimes {\mathcal O}_{Y} \to {E}_{r}\vert_{Y} (t) \to L' \to 0, 
\end{equation}
where $L' $ is a line bundle of degree $r+1-2g, \ h^0(Y, L) = r- g, \ d({E}_{r}\vert_{Y}) = - g$.\\

Define         
$$\theta_Y:= \theta\vert_Y\, ,$$
the restriction of the line bundle $\theta$ to $Y$.
 
        In the case $Y$ is smooth,  Ein and Lazarsfeld proved the stability of $E_r\vert_Y$ (\cite[Proposition 1.5]{EL}, \cite[Proposition 2.2]{EL}).  In case $Y$ has nodes, the stability of $E_r\vert_Y$ was proved by myself and Parameswaran (see \cite[Theorem 8.10]{BhP2} for non-rational curve, \cite[Theorem 9.3]{BhP2} for a rational nodal curve with at least one node).

The exact sequence \ref{d4a}  gives

\begin{equation}\label{Vry}
0\to H^0(Y, E_r\vert_Y (t)) \otimes {\mathcal O}_{Y} (-t) \otimes \mathcal{P}_x\vert_Y \longrightarrow
V_{r,x}\vert_Y  \longrightarrow L' (-t) \otimes \mathcal{P}_x\vert_Y \to 0\, . 
\end{equation}

\begin{proposition} \label{VL1}
Let $r \ge 2g -1$.
$$h^0( (V_{r,x}(j \theta))\vert_Y) \ = \ 0\, , \ \forall  j \le 0\, ,$$
$$h^1( (V_{r,x}(j \theta))\vert_Y) \ = (r+1-g)( g(1- j) - 1) + g \ , \ \forall  j \le 0\, .$$ 
\end{proposition}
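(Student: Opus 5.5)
The plan is to reduce both claims to the stability and the degree of $E_r\vert_Y$, and then apply Riemann--Roch on $Y$.

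First I would record the numerical invariants of $(V_{r,x}(j\theta))\vert_Y$. It has rank $r+1-g$, because $E_r$ does. From \eqref{d4a} we have $d(E_r\vert_Y) = -g$. Since $x \in J(Y)$, the restriction $\mathcal{P}_x\vert_Y$ is a line bundle of degree $0$ on $Y$, so $d(V_{r,x}\vert_Y) = -g$. Next I need $\deg \theta_Y = g$. This is the classical fact that the theta divisor meets the Abel--Jacobi curve in $g$ points (Poincar\'e's formula when $Y$ is smooth). For nodal $Y$ I would take it from the computations in \cite[Sections 8, 9]{BhP2}, or argue by degenerating to the smooth case, since $\theta$ and the embedded curve vary in flat families. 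Granting this,
$$d\big((V_{r,x}(j\theta))\vert_Y\big) = -g + (r+1-g)\,jg\, ,$$
which is strictly negative for every $j \le 0$.

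For the vanishing of $h^0$ I would use stability. By \cite[Proposition 1.5, Proposition 2.2]{EL} in the smooth case, and \cite[Theorem 8.10, Theorem 9.3]{BhP2} in the nodal case, $E_r\vert_Y$ is stable. Tensoring with the line bundles $\mathcal{P}_x\vert_Y$ and $\theta_Y^{j}$ preserves stability, so $(V_{r,x}(j\theta))\vert_Y$ is stable of negative degree. A nonzero section would give an injection $\mathcal{O}_Y \to (V_{r,x}(j\theta))\vert_Y$, i.e. a subsheaf of slope $\ge 0$, which exceeds the negative slope of the bundle and contradicts stability. Hence $h^0 = 0$ for all $j \le 0$.

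I would avoid working directly with the sequence \eqref{Vry}. Its subbundle $\mathcal{O}_Y(-t)\otimes\mathcal{P}_x\vert_Y\otimes\theta_Y^{j}$ has negative degree and so has no sections. However, the quotient $L'(-t)\otimes \mathcal{P}_x\vert_Y \otimes \theta_Y^j$ can have nonnegative degree for $j=0$, so that route gives no vanishing without extra work.

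Finally, Riemann--Roch on the Gorenstein curve $Y$ gives $\chi = d + \mathrm{rk}\,(1-g)$. Combined with $h^0=0$, this yields
$$h^1 = g - (r+1-g)jg + (r+1-g)(g-1) = (r+1-g)\big(g(1-j)-1\big) + g\, ,$$
which is the stated formula. I expect the main obstacle to be justifying $\deg \theta_Y = g$ in the nodal case, where $\overline{J}(Y)$ is singular. Everything else is formal once stability is quoted.
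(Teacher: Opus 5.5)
Your proposal is correct and follows essentially the same route as the paper: compute $d\big((V_{r,x}(j\theta))\vert_Y\big) = -g + jg(r+1-g) < 0$ from $\theta\cdot Y = g$ and $\deg \mathcal{P}_x\vert_Y = 0$, then get $h^0 = 0$ from the stability of $E_r\vert_Y$ twisted by line bundles (quoted from Ein--Lazarsfeld and Bhosle--Parameswaran), and finally obtain $h^1$ by Riemann--Roch. The only difference is that the paper simply asserts $\theta\cdot Y = g$, without the extra justification you flag for the nodal case.
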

\begin{proof}
                      One has  the intersection $\theta \cdot Y = g$. Hence $\theta_Y$ is a line bundle of degree $g$ on $Y$. We note that due to the normalisation of $\mathcal{P}$, the line bundle $\mathcal{P}_x$ has first Chern class $0$ and hence $\mathcal{P}_x\vert_Y$ has degree $0$ on $Y$.
Hence we have         
$$d(V_{r,x} (j \theta) \vert_Y) = d(V_{r,x} \vert_Y) + j g \  r(V_r)) = - g + j g(r+1-g) \le -g < 0$$
for $j \le 0$. Since $V_{r,x}(j \theta)\vert_Y$ is a stable vector bundle of negative degree (\cite[Sections 8 and 9]{BhP2}), it has no non-zero sections on $Y$.

The nontrivial cohomology $H^1(V_{r,x}(j \theta)\vert_Y)$ can now be computed using Riemann-Roch theorem.
\end{proof}

\begin{proposition} \label{VL2}
Let  $r \ge 2g - 1$.
Then\\
(1) $\forall j \ge 2\, ,$ one has 
$$ h^1( (V_{r,x} (j \theta)) \vert_Y) \ = \ 0\, , $$ 
$$ h^0( (V_{r,x} (j \theta)) \vert_Y) \ = \ ((j-1)g+1)(r+1-g) -g\, ,$$
(2) ${\rm for \ a \ general} \  x \in J(Y)\, ,$ we have 
$$ h^1( (V_{r,x} (\theta)) \vert_Y) \ = \ 0\, , $$
$$ h^0( (V_{r, x} (\theta))\vert_Y) = r+1-2g\, .$$
\end{proposition}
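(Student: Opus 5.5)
Set $W_j := (V_{r,x}(j\theta))\vert_Y = V_{r,x}\vert_Y \otimes \theta_Y^{\otimes j}$. The plan is to establish vanishing of $h^1$ and then read off $h^0$ from Riemann--Roch, as in Proposition \ref{VL1}.

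\emph{Numerical data.} From the proof of Proposition \ref{VL1}, $\theta_Y$ is a line bundle of degree $g$ and $\mathcal{P}_x\vert_Y$ has degree $0$. Hence $W_j$ has rank $r+1-g$ and degree $-g+jg(r+1-g)$. Riemann--Roch gives
$$\chi(W_j) = -g + (r+1-g)\big((j-1)g+1\big).$$
For $j=1$ this is $r+1-2g$. So both formulas follow once we know $h^1(W_j)=0$ in the stated ranges.

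\emph{Vanishing of $h^1$.} We twist the sequence \eqref{Vry} by $\theta_Y^{\otimes j}$. Write $M := \mathcal{O}_Y(-t)\otimes\mathcal{P}_x\vert_Y\otimes\theta_Y^{\otimes j}$, of degree $jg-1$. The result is
$$0\to H^0(Y,L)\otimes M \to W_j \to L'(-t)\otimes \mathcal{P}_x\vert_Y\otimes \theta_Y^{\otimes j}\to 0.$$
The quotient has degree $r-2g+jg \ge jg-1 \ge 2g-1$ for $j\ge 1$, since $r\ge 2g-1$, so its $H^1$ vanishes. It therefore suffices to show $H^1(Y,M)=0$.

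\emph{Case $j\ge 2$.} Here $\deg M = jg-1 \ge 2g-1 > 2g-2$, so $H^1(M)=0$ for every $x$. This gives (1). Alternatively, one can use stability of $W_j$ from \cite[Sections 8 and 9]{BhP2}: by Serre duality, $h^1(W_j)=h^0(W_j^*\otimes\omega_Y)$, and $W_j^*\otimes\omega_Y$ is stable of slope $2g-2-\mu(W_j) < 0$ once $\mu(W_j) > 2g-2$, which happens for $j\ge 2$.

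\emph{Case $j=1$ (the main obstacle).} Now $\deg M = g-1$, so $h^1(M)=h^0(M)$, and this need not vanish for every $x$. The map $x \mapsto \mathcal{P}_x\vert_Y$ is the pullback $J(Y)\to J(Y)$ dual to the Abel--Jacobi embedding. By autoduality of the (compactified) Jacobian (\cite{Mu}, \cite{BhS}) this map is an isomorphism, in particular surjective. So as $x$ varies, $M=\theta_Y(-t)\otimes\mathcal{P}_x\vert_Y$ runs over all line bundles of degree $g-1$. Those with a section form the theta divisor $W_{g-1}$ (for nodal $Y$, its intersection with line bundles), a proper closed subset. Hence for general $x$ we get $h^0(M)=h^1(M)=0$, so $h^1(W_1)=0$ and $h^0(W_1)=r+1-2g$.

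If surjectivity of $x\mapsto\mathcal{P}_x\vert_Y$ is delicate in the nodal case, it is enough to show that its image is not contained in the translated theta divisor. That can be checked on a single $x$, so openness of the condition $h^0(M)=0$ gives the general statement.
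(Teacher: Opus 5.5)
\textbf{The gap is in the quotient term for $j=1$.} Your overall route is the paper's: twist \eqref{Vry} by $\theta_Y^{\otimes j}$, kill $H^1$ of the sub and of the quotient, then apply Riemann--Roch. Your numerics and your treatment of $j\ge 2$ are fine.

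The inequality chain $r-2g+jg\ge jg-1\ge 2g-1$ fails at $j=1$, because there $jg-1=g-1<2g-1$. The quotient $Q:=L'(-t)\otimes\mathcal{P}_x\vert_Y\otimes\theta_Y$ has degree $r-g$, and this is $\ge 2g-1$ only when $r\ge 3g-1$.

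For $2g-1\le r\le 3g-2$, $H^1(Q)$ does not vanish for every $x$. Take $g=2$, $r=3$: then $\deg Q=1$, and $h^1(Q)=h^0(\omega_Y\otimes Q^{-1})=1$ whenever $Q\cong\omega_Y(-p)$ for some smooth point $p$. This happens for some $x$, by the very surjectivity of $x\mapsto\mathcal{P}_x\vert_Y$ that you invoke. So your reduction ``it suffices to show $H^1(Y,M)=0$'' is wrong in that range, and $h^1(W_1)=0$ is not yet proved there.

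\textbf{The repair, which is what the paper does.} Apply the genericity argument you already use for $M$ to $Q$ as well. As $x$ varies, $Q$ runs over all line bundles of degree $r-g\ge g-1$. The condition $h^1(Q)=h^0(\omega_Y\otimes Q^{-1})\neq 0$ holds only on a proper closed subset. Indeed, $\omega_Y\otimes Q^{-1}$ has degree $3g-2-r\le g-1$, and effective line bundles of such degree form a locus of dimension at most $g-1<g$.

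Then choose $x$ in the intersection of the two nonempty open subsets $\{h^1(M)=0\}$ and $\{h^1(Q)=0\}$ of the irreducible variety $J(Y)$. With both vanishings, $h^1(W_1)=0$ and $h^0(W_1)=r+1-2g$ follow as you say.

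Incidentally, your justification of genericity via autoduality, i.e.\ surjectivity of $x\mapsto\mathcal{P}_x\vert_Y$, is more explicit than the paper's, which simply asserts the vanishing ``for a general $x$''.
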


\begin{proof}
   Tensoring  the exact sequence \eqref{Vry} with $(j \theta_Y) = \theta_Y^{\otimes j}$ and taking the  cohomology exact sequence associated to it, we get the short exact sequence 
\begin{equation}  \label{jpositive}   
    H^0(Y, E_r\vert_Y (t)) \otimes H^1(j {\theta_Y} (-t) \otimes \mathcal{P}_x\vert_Y) \rightarrow H^1(V_{r,x} (j \theta) \vert_Y)  \rightarrow H^1(L' (-t) \otimes \mathcal{P}_x\vert_Y \otimes  j \theta_Y) \, .
\end{equation}

Note that $h^0(Y, E_r\vert_Y (t)) = h^0(Y, L)= r - g$. One has 
$d(j {\theta_Y} (-t) \otimes \mathcal{P}_x\vert_Y) = j g -1$. Since $j g -1\ge 2g-1$ for $j \ge 2$, one has  
$$H^1(j {\theta_Y} (-t) \otimes \mathcal{P}_x\vert_Y) = 0 \ {\rm for}  \ j \ge 2\, .$$

Take $j =1$. Then $d(\theta_Y (-t) \otimes \mathcal{P}_x\vert_Y) = g -1$ and $h^1(\theta_Y (-t) \otimes \mathcal{P}_x\vert_Y) = 0$ for $x$ general (i.e., $x$ in an (unspecified) open subset of 
$J(Y)$). Hence  
$$H^1({\theta_Y} (-t) \otimes \mathcal{P}_x\vert_Y) = 0\, , \ {\rm for \ a \  general} \  x \in J(Y)\, .$$

Take $j=1, g=2$. Then $\omega_Y = \theta_Y$ \cite{BL}. By Serre duality, one has 
$h^1({\theta_Y} (-t) \otimes \mathcal{P}_x\vert_Y) = h^0(\omega_Y\otimes  \theta^*_Y (t) \otimes (\mathcal{P}_x\vert_Y)^*) = \mathcal{P}^*_x\vert_Y (t)= 0$ if $ \mathcal{P}^*_x\vert_Y (t)$ does not belong to $Y \subset J(Y)$, otherwise $h^1({\theta_Y} (-t) \otimes \mathcal{P}_x\vert_Y) = 1$. 

       We now compute $h^1(L' (-t) \otimes \mathcal{P}_x\vert_Y \otimes  j {\theta_Y})$ for $j \ge 1$. One has 
$d(L' (-t) \otimes \mathcal{P}_x\vert_Y \otimes  j {\theta_Y}) =  r -2g + jg = r + (j-2) g$. If $j \ge 2$, then $r+(j-2)g \ge r \ge 2g-1$ so that  
$$h^1(L' (-t) \otimes \mathcal{P}_x\vert_Y \otimes  j {\theta_Y}) = 0 \ \forall j \ge 2\, .$$   
Let  $j =1$. Then  $d(L' (-t) \otimes \mathcal{P}_x\vert_Y \otimes j {\theta_Y}) =  r - g$. If $r \ge 3g-1$, then  $r-g \ge 2g-1$ so that $h^1(L' (-t) \otimes \mathcal{P}_x\vert_Y \otimes {\theta_Y}) = 0$. If $2g-1 \le r \le 3g -2$, then $r-g \ge g-1$  so that $h^1(L' (-t) \otimes \mathcal{P}_x\vert_Y \otimes  {\theta_Y}) = 0$ for a general $x \in J(Y)$. Hence we have

\[ h^1(L' (-t) \otimes \mathcal{P}_x\vert_Y \otimes {\theta_Y}) = 
\begin{cases}
 0 &  \mbox{if } j \ge 2\\ 
 0 &   \mbox{if } j=1, r \ge 3g-1\\
 0 &   \mbox{if } j=1, 2g-1 \le r \le 3g-2, \  x \ \mbox{general}. 
\end{cases}
\]

The proposition now follows from the exact sequence \eqref{jpositive}.
Now the cohomology $h^0(V_{r,x} (j \theta)_Y)$ can be computed using Riemann-Roch theorem.

\end{proof}

\section{ACM bundles on the Jacobian surface}  

         In this section, we specialise to $g=2$ i.e. $\overline{J}(Y)$ is the compactified Jacobian of an irreducible projective curve $Y$ of arithmetic genus $2$ with at most nodes as singularities. We show that for each $r \ge 3$, there exits a two dimensional  family of ACM bundles on the surface $\overline{J}(Y)$. In particular, this gives two dimensional  families of ACM bundles on the Jacobian of a smooth curve of genus $2$.
        
\begin{definition} \label{ACMbundles}
      Let $Z$ be an $n$-dimensional projective variety with an ample line bundle $A$. A vector bundle $F$ on $Z$ is called ACM if 
      $$H^i(Z, F \otimes A^j) = 0 \ \forall j \in \mathbb{Z} \ {\rm and} \ 0 < i < n\, .$$
An ACM bundle $F$ is Ulrich if 
$$H^0(Z, F \otimes A^{-1}) = 0 \  {\rm and} \ H^0(Z,F) = r(F) \cdot deg(Z)\, .$$      
\end{definition}

\begin{theorem} \label{PxACMg2}
        Assume that $g=2$ and $\mathcal{P}_x\vert_Y \neq \mathcal{O}_Y, \ \mathcal{P}_x$ being the restriction of the Poincar\'e bundle (defined by \eqref{Px}). Then \\
(1) $h^1(\overline{J}(Y), \mathcal{P}_x(j \theta)) = 0\, , \ \forall j \in \mathbb{Z}\, .$ In particular,  $\mathcal{P}_x$ is an ACM bundle on $\overline{J}(Y)$. \\
(2) $h^0(\overline{J}(Y), \mathcal{P}_x(j \theta)) = 0, \ \forall j \le 0\ ,$  \\
~~ $h^0(\overline{J}(Y), \mathcal{P}_x(j \theta)) = j^2, \ \forall j \ge 0\ .$  \\ 
(3) $h^2(\overline{J}(Y), \mathcal{P}_x(j \theta)) = 0, \ \forall j \ge 0\ ,$\\
~~ $h^2(\overline{J}(Y), \mathcal{P}_x(j \theta)) = j^2, \ \forall j \le 0\ .$
\end{theorem}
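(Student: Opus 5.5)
We argue by induction on $j$, starting from Theorem \ref{cohomPx} and using the restriction sequence to the Abel--Jacobi curve. Since $g=2$, the curve $Y\subset \overline{J}(Y)$ is a theta divisor, so $\mathcal{O}(Y)\cong\theta$ and $\theta_Y=\omega_Y$, which has degree $2$. Tensoring the ideal sheaf sequence of $Y$ by $\mathcal{P}_x(j\theta)$ gives
\begin{equation*}
0\to \mathcal{P}_x((j-1)\theta)\to \mathcal{P}_x(j\theta)\to \mathcal{P}_x\vert_Y\otimes \omega_Y^{\otimes j}\to 0 .
\end{equation*}
The line bundle $\mathcal{P}_x$ has degree $0$ on $Y$; write $M:=\mathcal{P}_x\vert_Y$. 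By hypothesis $M\neq \mathcal{O}_Y$, and we use this to compute the cohomology of $M\otimes\omega_Y^{j}$. For $j\ge 2$ its degree is $2j\ge 3=2g-1$, so $h^1=0$ and $h^0=2j-1$. For $j=1$, Serre duality gives $h^1(M\otimes\omega_Y)=h^0(M^*)=0$, since $M^*$ is a nontrivial degree-$0$ line bundle; hence $h^0=1$. For $j=0$ both $h^0(M)$ and $h^1(M)$ vanish, because $M$ is nontrivial of degree $0$ and $\chi(M)=0$ (here we use $g=2$). For $j\le -1$ the degree is negative, so $h^0=0$ and $h^1=h^0(M^*\otimes\omega_Y^{1-j})=2(1-j)-1=1-2j$.

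The induction begins at $j=0$, where Theorem \ref{cohomPx}(1) gives $H^i(\mathcal{P}_x)=0$ for all $i$. (This needs $x\neq\mathcal{O}_Y$, which holds because $\mathcal{P}_x\vert_Y\neq\mathcal{O}_Y$. I also assume $x\in J(Y)$, which is implicit in the statement.) For $j\ge 1$ we feed the curve cohomology into the long exact sequence. Since $H^1$ of the restriction vanishes for $j\ge1$, the maps $H^1(\mathcal{P}_x((j-1)\theta))\to H^1(\mathcal{P}_x(j\theta))$ and $H^2(\mathcal{P}_x((j-1)\theta))\to H^2(\mathcal{P}_x(j\theta))$ are surjective, and $H^2$ of the restriction is $0$ because $Y$ is a curve. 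Inductively this gives $h^1=h^2=0$ for all $j\ge0$, and $h^0(\mathcal{P}_x(j\theta))=h^0(\mathcal{P}_x((j-1)\theta))+h^0(M\otimes\omega_Y^j)$. This yields $h^0=1,\,1+3=4,\,4+5=9,\dots$, that is $j^2$. As a consistency check, Riemann--Roch on the abelian-type surface gives $\chi=j^2\theta^2/2=j^2$.

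For $j\le 0$ we can either run the sequence downward or, more simply, dualize. The bundle $\mathcal{P}_x^*$ is $\mathcal{P}_{x^{-1}}$ (the inverse line bundle), so it satisfies the same hypothesis, since its restriction to $Y$ is $M^*\ne\mathcal{O}_Y$. Lemma \ref{L'0} and Serre duality with trivial dualizing sheaf give $h^i(\mathcal{P}_x(j\theta))=h^{2-i}(\mathcal{P}_x^*(-j\theta))$. The $j\ge0$ results applied to $\mathcal{P}_x^*$ then give (1)--(3) for $j\le0$: $h^0=h^1=0$ and $h^2=j^2$. This also finishes (1), and $\mathcal{P}_x$ is not Ulrich since $h^0(\mathcal{P}_x)=0\ne \deg \overline{J}(Y)=2$.

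The main obstacle is justifying the geometric inputs on the possibly singular compactified Jacobian. First, we need that the Abel--Jacobi image of $Y$ is a Cartier divisor linearly equivalent to $\theta$ (with the base point $t$ chosen compatibly), and that $\theta_Y=\omega_Y$; the paper quotes the latter from \cite{BL}. Second, we need that $\overline{J}(Y)$ has trivial dualizing sheaf, so that Lemma \ref{L'0} applies. Third, we need $\mathcal{P}_x^*\cong\mathcal{P}_{x^{-1}}$ with $x^{-1}\in J(Y)$, so that the duality step is legitimate. If the divisor identification is only up to translation, we would absorb the translate into $x$, which changes $M$ by a translation that does not affect nontriviality.
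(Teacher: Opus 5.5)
Your proof is correct and follows the paper's argument: the restriction sequence to $Y=\theta$ with $\theta\vert_Y=\omega_Y$, ascending induction from Theorem \ref{cohomPx} at $j=0$, and telescoping $h^0$ to $j^2$. For $j\le 0$ the paper instead runs the sequence downward and uses $H^0(\mathcal{P}_x(j\theta))\subseteq H^0(\mathcal{P}_x)$, offering Serre duality (your route, which additionally needs $\mathcal{P}_x^*\cong\mathcal{P}_{x^{-1}}$) only as an alternative for (3). One slip to fix: for $M=\mathcal{P}_x\vert_Y$ nontrivial of degree $0$ one has $\chi(M)=1-g=-1$, so $h^1(M)=1$, not $0$; this is unused in your duality route, but it is exactly the term that gives $h^2(\mathcal{P}_x(-\theta))=1$ if you run the sequence downward, so with your stated value that option would give the wrong answer.
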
   
\begin{proof}
(1)  By Theorem \ref{cohomPx}, $H^1(\overline{J}(Y), \mathcal{P}_x(j \theta))= 0$ for $j =0$.
           
          Note that for $g=2$, one has the divisor $\theta = Y$ and $\theta\vert_Y = \omega_Y$. For any integer $j$, there is an exact sequence 
$$0 \longrightarrow \mathcal{P}_x((j-1) \theta) \longrightarrow \mathcal{P}_x(j \theta) \longrightarrow \mathcal{P}_x(j \theta)\vert_Y  \rightarrow 0\, .$$
The associated cohomology exact sequence is  
\begin{equation}\label{p1}
0 \longrightarrow H^0(\overline{J}(Y), \mathcal{P}_x((j-1) \theta)) \longrightarrow H^0(\overline{J}(Y), \mathcal{P}_x(j \theta)) \longrightarrow H^0(\overline{J}(Y), \mathcal{P}_x(j \theta)\vert_Y) 
\end{equation}
\begin{equation}\label{p2}
 \longrightarrow H^1(\overline{J}(Y), \mathcal{P}_x((j-1) \theta)) \longrightarrow H^1(\overline{J}(Y), \mathcal{P}_x(j \theta)) \longrightarrow H^1(\overline{J}(Y), \mathcal{P}_x(j \theta)\vert_Y) 
\end{equation}
\begin{equation}\label{p3}
 \longrightarrow H^2(\overline{J}(Y), \mathcal{P}_x((j-1) \theta)) \longrightarrow H^2(\overline{J}(Y), \mathcal{P}_x(j \theta)) \longrightarrow H^2(\overline{J}(Y), \mathcal{P}_x(j \theta)\vert_Y) = 0\,.
\end{equation}
{\bf Case $j \ge 1$:} \\ 
Since  $d(\mathcal{P}_x) = 0$, we have $d(\mathcal{P}_x(j \theta)) = j g$. Consider the sequence \eqref{p2} for  $j = 1$. Since $P_x\vert_Y  \neq \mathcal{O}_Y,  \mathcal{P}_x(\theta)\vert_Y \neq \omega_Y$. It is a line bundle of degree $2$.  It follows that $h^1(Y, \mathcal{P}_x(\theta)\vert_Y) = 0$. Hence Theorem \ref{cohomPx} and \eqref{p2} give $H^1(\overline{J}(Y), \mathcal{P}_x(\theta)) = 0$.

             Let $j \ge 2$, we prove by induction that $H^1(\overline{J}(Y), \mathcal{P}_x(j \theta)) = 0$. The result is true for $j=1$. By induction, $H^1(\overline{J}(Y), \mathcal{P}_x((j-1) \theta)) = 0$. We have $d(\mathcal{P}_x(j \theta)\vert_Y) \ge 2g$ so that $H^1(\overline{J}(Y), \mathcal{P}_x(j \theta)\vert_Y) = 0$. It follows from the exact sequence \eqref{p2} that $H^1(\overline{J}(Y), \mathcal{P}_x(j \theta)) = 0$.\\
{\bf Case $j \le -1$:} \\             
For $j \le -1$, we prove the result by descending induction. We assume that $H^1(\overline{J}(Y), \mathcal{P}_x(j \theta)) = 0$ and show that $H^1(\overline{J}(Y), \mathcal{P}_x((j-1) \theta)) =0$. From the exact sequences \eqref{p1} and \eqref{p2}, we have the exact sequence 
$$H^0(\overline{J}(Y), \mathcal{P}_x(j \theta)\vert_Y)  \longrightarrow H^1(\overline{J}(Y), \mathcal{P}_x((j-1) \theta)) \longrightarrow H^1(\overline{J}(Y), \mathcal{P}_x(j \theta))$$
Since $d(\mathcal{P}_x(j \theta)\vert_Y) \le -1$ for $j \le -1$, the first term vanishes. The third term vanishes by induction. It follows that the middle term $H^0(\overline{J}(Y), \mathcal{P}_x((j-1) \theta)) = 0$.  \\
(2) For $j \le 0, H^0(\overline{J}(Y), \mathcal{P}_x(j \theta)) \subseteq H^0(\overline{J}(Y), \mathcal{P}_x) = 0$, 

For $j \ge 2$, we have $h^1(\overline{J}(Y), \mathcal{P}_x(j \theta)\vert_Y) =  0$ so that $h^0(\overline{J}(Y), \mathcal{P}_x(j \theta)\vert_Y) = 2j -1$. For $j = 1, h^0(\overline{J}(Y), \mathcal{P}_x(j \theta)\vert_Y) = 1$ (as $\mathcal{P}_x\vert_Y \neq \mathcal{O}_Y$).  

       In view of Part (1), \eqref{p1} and  \eqref{p2}, we have an exact sequence (for all $j$) 
$$0 \longrightarrow H^0(\overline{J}(Y), \mathcal{P}_x((j-1) \theta)) \longrightarrow H^0(\overline{J}(Y), \mathcal{P}_x(j \theta)) \longrightarrow H^0(\overline{J}(Y), \mathcal{P}_x(j \theta)\vert_Y) \to 0\, ,$$
so that $h^0(\overline{J}(Y), \mathcal{P}_x(j \theta)) - h^0(\overline{J}(Y), \mathcal{P}_x((j-1) \theta)) =  h^0(\overline{J}(Y), \mathcal{P}_x(j \theta)\vert_Y) = 2j -1$ for $j \ge 1$. 
 Hence $h^0(\overline{J}(Y), \mathcal{P}_x(j \theta)) = j^2$.\\
 (3) Part (3) can be proved similarly using Part (1), \eqref{p3} and  \eqref{p2}. Alternatively, one can use Part (2) and Serre duality.
\end{proof}

       Let $V_{r, x}, x^* \notin Y$ be a twisted Picard bundle (see Definition \ref{Vrx}) on $\overline{J}(Y)$. 

\begin{theorem}  \label{thm1}
          Let $g =2$. Then the twisted Picard bundles  $V_{r, x}, r \ge 2, x^* \notin Y$ are ACM bundles for $x$ general in $J(Y)$. They are not Ulrich bundles. 
          
          The bundles $E_r \otimes \mathcal{P}_x, r \ge 2, x^* \in Y$ are not ACM bundles.  In particular the Picard bundles are not ACM bundles.       
\end{theorem}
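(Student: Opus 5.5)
The plan mirrors the proof of Theorem \ref{PxACMg2}. Since $g=2$, the divisor $\theta$ is the curve $Y$ and $\theta_Y=\omega_Y$, so for every $j\in\mathbb{Z}$ there is an exact sequence
$$0\to V_{r,x}((j-1)\theta)\to V_{r,x}(j\theta)\to V_{r,x}(j\theta)\vert_Y\to 0 .$$
The ACM condition on the surface $\overline{J}(Y)$ only asks that $H^1(\overline{J}(Y),V_{r,x}(j\theta))=0$ for all $j$. I will obtain this by induction in both directions from $j=0$, using the long exact cohomology sequence of the display above.

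\emph{Base case and ascending induction.} For $j=0$ the vanishing is Theorem \ref{chomB}, since $x^*\notin Y$. For the step from $j-1$ to $j$ with $j\ge1$, the sequence
$$H^1(V_{r,x}((j-1)\theta))\to H^1(V_{r,x}(j\theta))\to H^1(V_{r,x}(j\theta)\vert_Y)$$
reduces the problem to $h^1(V_{r,x}(j\theta)\vert_Y)=0$. For $j\ge2$ this is Proposition \ref{VL2}(1). For $j=1$ it is Proposition \ref{VL2}(2), which needs $x$ general. Concretely, in its proof the term $h^1(\theta_Y(-t)\otimes\mathcal{P}_x\vert_Y)$ vanishes once $\mathcal{P}^*_x\vert_Y(t)\notin Y$, and the $L'$-term vanishes for general $x$. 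Imposing these open conditions together with $x^*\notin Y$ gives the genericity assumption in the statement. (For $r=2=2g-2$ the bundle $E_r$ may fail to be a bundle, so I would really work with $r\ge3=2g-1$, as in the introduction.)

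\emph{Descending induction.} For $j\le0$, assume $H^1(V_{r,x}(j\theta))=0$. The sequence
$$H^0(V_{r,x}(j\theta)\vert_Y)\to H^1(V_{r,x}((j-1)\theta))\to H^1(V_{r,x}(j\theta))$$
has first term zero by Proposition \ref{VL1}, because the restriction is stable of negative degree. Hence $H^1(V_{r,x}((j-1)\theta))=0$. An alternative route is to apply Lemma \ref{L'0} to $V^*_{r,x}$ and argue symmetrically, but the direct route above suffices.

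\emph{Not Ulrich.} Lemma \ref{L'1}(1) gives $H^0(V_{r,x}(-\theta))=0$. With $H^1$ vanishing throughout, $h^0(V_{r,x}(\theta))=h^0(V_{r,x}(\theta)\vert_Y)=r+1-2g=r-3$. The Ulrich count concerns $H^0$ after twisting by $A=\theta$, whereas the definition as literally written concerns $H^0(F)$. Either way the count fails. With $H^0(V_{r,x})=0$, the requirement $h^0=r(F)\deg Z=(r-1)\cdot2$ cannot hold, and likewise $r-3\neq 2(r-1)$.

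\emph{Non-ACM part.} For $x^*\in Y$, Theorem \ref{chomB} gives $h^1(V_{r,x})=\binom{1}{0}=1\neq0$, so $V_{r,x}$ is not ACM. Taking $x=\mathcal{O}_Y$, whose transform lies on $Y$ under the normalisation chosen, shows that the Picard bundle $E_r$ itself is not ACM.

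The main obstacle is the $j=1$ step, namely pinning down the open set of $x$ on which both $H^1$ terms in \eqref{jpositive} vanish and checking that this set is nonempty and compatible with $x^*\notin Y$.
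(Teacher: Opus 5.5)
Your proposal is correct and follows the paper's proof essentially step for step. It uses the same exact sequence $0\to V_{r,x}((j-1)\theta)\to V_{r,x}(j\theta)\to V_{r,x}(j\theta)\vert_Y\to 0$, ascending induction from Theorem \ref{chomB} via Proposition \ref{VL2} (genericity of $x$ enters only at $j=1$), descending induction via Proposition \ref{VL1}, and Theorem \ref{chomB} for the non-Ulrich and non-ACM claims. Your caveat that one really needs $r\ge 3=2g-1$ is right: it is what the paper's proof actually concludes and what its introduction states, despite the $r\ge 2$ in the statement.
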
        

\begin{proof}
 There is an exact sequence (for any $j$)
\begin{equation} \label{seq}
 0 \to V_{r,x}(j \theta) \to V_{r,x}((j+1) \theta) \to V_{r,x}((j+1) \theta\vert_Y) \to 0\, .
\end{equation} 
         The associated cohomology exact sequence is 
\begin{equation} \label{seq1}
  H^0(V_{r,x}((j+1) \theta)) \to H^0(V_{r,x}((j+1) \theta\vert_Y)) 
  \to H^1(V_{r,x}(j \theta)) 
  \end{equation}
$$  \to H^1(V_{r,x}((j+1) \theta)) \to H^1(V_{r,x}((j+1) \theta\vert_Y)) 
 \to \, .$$
 (a) To prove that $h^1(V_{r,x}(j \theta))= 0$ for $j \le 0\, :$\\
We prove this by descending induction on $j$. By Theorem \ref{chomB}, $h^1(V_{r,x}) = 0$, so the result is true for $j = 0$. Let $j \le -1$. Assume that the result is true for $j+1$, we shall prove that  the result is true for $j$. For $j \le -1$, one has $H^0(V_{r,x}((j+1) \theta)) \subseteq H^0(V_{r,x}) = 0$ (by Theorem \ref{chomB}) and $h^1(V_{r,x}((j+1) \theta)) = 0$ by induction. Then by exact sequence \eqref{seq1} we have 
$$h^1(V_{r,x}(j \theta)) = h^0(V_{r,x}((j+1) \theta\vert_Y)) = 0\, , $$
by Proposition \ref{VL1}. \\
(b) To prove that $h^1(V_{r,x}(j \theta))= 0$ for $j \ge 0$ and for a general $x \in J(Y)$:\\
 This vanishing will be proved by induction on $j$. The result is true for $j=0$. Assuming that 
$h^1(V_{r,x}(j \theta))= 0$, we shall prove that  $h^1(V_{r,x}((j+1) \theta))= 0$. By Proposition \ref{VL2},  $h^1(V_{r,x}((j+1) \theta\vert_Y)) = 0 $ for a general $x \in J(Y)$. It follows from the exact sequence \eqref{seq1} that  $h^1(V_{r,x}((j+1) \theta))= 0$ for all $j \ge 0$.

From parts (a) and (b), we have 
$$ h^1(V_{r,x}(j \theta))= 0  \ \forall  j \in \mathbb{Z}\, , $$
proving that $V_{r,x}, r \ge 3, x^* \notin Y,$ are ACM bundles for a general $x \in J(Y)$.   

          The bundles $V_{r,x}$ are not Ulrich as their zeroth cohomology vanishes by Theorem \ref{chomB}.

          The bundles $E_r \otimes \mathcal{P}_x, r \ge 3, x^* \in Y$ are not ACM bundles as they have non vanishing first cohomology,  by Theorem \ref{chomB}.     
\end{proof}

\begin{remark}
   For any $g$, the bundles $E_r \otimes \mathcal{P}_x, r \ge 2g-1, x^* \in Y$ are not ACM bundles as they have non vanishing middle cohomology by Theorem \ref{chomB}. 
\end{remark} 
              
\newpage

\begin{section}{Embedding of the Jacobian }

                    Our aim in this section is to define an embedding of the (generalised) Jacobian $J(Y)$ in the moduli space $U^s_Y(n,d)$ of stable vector bundles on $Y$ using the twisted restriction $E_Y$ of a Picard bundle to $Y \subset J(Y)$.  On smooth curves,  Yingchen Li had defined an embedding $\alpha_Y$ of $J(Y)$  into  $U_Y(n,d)$ using a stable vector bundle $E_0$ such that the  bundle of endomorphism $End (E_0)$  has no line bundle direct summands \cite{Li}. The assumption on $End(E_0)$ is crucially used to prove the injectivity of $\alpha_Y$. The  results of \cite{Li} were extended to a nodal curve by me \cite{Bh} under the additional assumption that the pull back of $E_0$ to the normalisation $X$ is stable. We do not know if $E_Y$ has any direct summand which is a line bundle.  Moreover,  the pull back of $E_Y$ to the normalisation $X$ is not semistable.  Therefore, we need several changes in the proofs of \cite{Bh} (or \cite{Li} in the smooth case). 
Our proof is on similar lines as that in \cite{Bh}, \cite{Li}.                    

\subsection{The Embedding $\alpha_Y$ }  \label{alphaembed} \hfill

            We  assume that $r \ge 2g, g \ge 2$. In the notations of subsection \ref{picardsheaves}, we have a universal family $\mathcal{P} \to \overline{J}(Y) \times \overline{J}(Y)$ with the restriction $\mathcal{L} \to \overline{J}(Y) \times Y$ giving the Picard bundle $E_r$ on the first factor $\overline{J}(Y)$. If $p_1, p_2$ are the projections from $\overline{J}(Y) \times \overline{J}(Y) \to \overline{J}(Y)$ to the first and second factors, define 
            $$\mathcal{V} := (p_1^*(E_r) \otimes \mathcal{P}) \vert_{Y \times \overline{J}(Y)}\, ,$$
and  $\mathcal{V}(t) = \mathcal{V} \otimes p_1^*\mathcal{O}_Y(t)$. Let $E_Y := E_r \vert_Y \otimes  \mathcal{O}_Y(t)$. Then for $x \in J(Y)$ (second factor), $\mathcal{V}(t)_{Y \times x} = E_Y(t) \otimes \mathcal{P}_x\vert_Y$. Since $E_Y$ is stable,  so is $E_Y \otimes N$ for any line bundle $N$ on $Y$. Thus $\mathcal{V}(t)\vert_{Y \times J(Y)}$ is a family of stable vector bundles on $Y$, of rank $r_Y= r+1-g$ and degree $d_Y= r+1-2g$, parametrised by $J(Y)$. Hence it gives a morphism
$$\alpha_Y:   J(Y) \longrightarrow U_Y^s(r_Y,d_Y)\, ; \alpha_Y(N) = N \otimes E_Y\, . $$   

\begin{proposition} \label{prop1.6}
  The morphism $\alpha_Y$ is injective. 
\end{proposition}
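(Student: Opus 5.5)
The plan is to reduce injectivity to a statement about $E_Y$ alone and then to read it off from the sections of $E_Y$, using the exact sequence \eqref{d4a}. Suppose $\alpha_Y(N)\cong\alpha_Y(N')$ for $N,N'\in J(Y)$. Put $M:=N'\otimes N^{-1}\in J(Y)$. Tensoring by $N^{-1}$ gives an isomorphism $E_Y\cong E_Y\otimes M$, and the goal is to show $M\cong\mathcal{O}_Y$.

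I will not try to work with $End(E_Y)$ or with the pullback of $E_Y$ to $X$, which is not semistable. Instead I will use the structure of $E_r\vert_Y(t)$ given by \eqref{d4a}: there is a trivial subbundle $H^0(Y,L)\otimes\mathcal{O}_Y$ of rank $r-g$, with line bundle quotient $L'$ of degree $r+1-2g$. From the proof of Proposition \ref{VL2} we have $h^0(Y,E_r\vert_Y(t))=h^0(Y,L)=r-g$. The isomorphism $E_Y\cong E_Y\otimes M$ is just a twist of $E_r\vert_Y(t)\cong E_r\vert_Y(t)\otimes M$, so it gives
\[
h^0\bigl(Y,E_r\vert_Y(t)\otimes M\bigr)=r-g .
\]

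Next, assume $M\not\cong\mathcal{O}_Y$ and tensor \eqref{d4a} by $M$. Since $M$ is a nontrivial line bundle of degree $0$ on an integral curve, $H^0(Y,M)=0$. The cohomology sequence therefore gives an injection
\[
H^0\bigl(Y,E_r\vert_Y(t)\otimes M\bigr)\hookrightarrow H^0(Y,L'\otimes M),
\]
and hence $h^0(Y,L'\otimes M)\ge r-g$. Now $L'\otimes M$ is a line bundle of degree $r+1-2g\ge 1$, because $r\ge 2g$. I will use the standard bound: on an integral projective curve of arithmetic genus $g\ge 1$, a line bundle $A$ of degree $a\ge 1$ satisfies $h^0(A)\le a$. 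The reason is that if $h^0(A)=a+1$, one can subtract $a$ general smooth points one at a time and reach a degree $0$ line bundle with two independent sections. That bundle would then be trivial with $h^0=2$, which is impossible. Applying the bound,
\[
r-g\le h^0(Y,L'\otimes M)\le r+1-2g ,
\]
which forces $g\le 1$. This contradicts the standing assumption $g\ge 2$, so $M$ is trivial and $N\cong N'$.

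The main point to check carefully is the inequality $h^0(A)\le \deg A$ on the nodal curve $Y$. The subtraction argument only needs smooth points and the fact that a degree $0$ line bundle with a nonzero section is trivial, and both hold on an irreducible nodal curve, so I expect this step to go through. I should also confirm two facts the argument relies on: that $L'$ in \eqref{d4a} is genuinely a line bundle, as stated there, and that $h^0(E_r\vert_Y(t))$ equals $r-g$ exactly rather than only being at least $r-g$.
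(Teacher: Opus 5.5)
Your reduction to $E_Y\cong E_Y\otimes M$ with $M$ a nontrivial degree-$0$ line bundle, followed by the section count using \eqref{d4a}, is exactly the paper's argument. Only the lower bound $h^0(E_Y)\ge r-g$ is needed, and it is immediate from the trivial subbundle, so you do not have to check equality. The gap is in the final step. The bound $h^0(A)\le \deg A$ for $\deg A=a\ge 1$ on an integral curve of arithmetic genus $g\ge 1$ is true, but your justification is off by one. If $h^0(A)=a+1$, each of the $a$ removed points lowers $h^0$ by at most one. You therefore reach a degree-$0$ bundle with $h^0\ge 1$, not $\ge 2$, and there is no contradiction. What your argument actually proves is $h^0(A)\le a+1$. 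That gives only $r-g\le r+2-2g$, i.e.\ $g\le 2$, so as written the proof fails exactly in the allowed case $g=2$.

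To fix it, stop one step earlier. Removing $a-1$ general smooth points gives a degree-$1$ line bundle $A_1$ with $h^0(A_1)\ge 2$. A nonzero section of $A_1$ vanishes on an effective Cartier divisor of length $1$. This must be a smooth point $p$, because the maximal ideal at a node is not principal, so $A_1\cong\mathcal{O}_Y(p)$. A second, nonconstant section is then a rational function with exactly a simple pole at $p$. It defines a finite morphism $Y\to\mathbb{P}^1$ of degree $1$, which is an isomorphism since it is birational onto a normal curve. This contradicts $g\ge 1$.

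Alternatively, finish as the paper does. When $r\ge 4g-2$, Riemann--Roch gives $h^0(L'\otimes M)=r+2-3g<r-g$. When $2g\le r\le 4g-3$, Clifford's theorem gives $h^0(L'\otimes M)\le \frac{r+3}{2}-g<r-g$. Once the lemma is proved correctly, your uniform bound replaces the paper's case split with a single elementary inequality and avoids invoking Clifford's theorem on a nodal curve.
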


\begin{proof}
   Suppose that $N_1, N_2 \in J(Y), N_1 \neq N_2$ and $\alpha_Y(N_1) =  \alpha_Y(N_2)$. Then $E_Y \otimes N_1 = E_Y \otimes N_2$ i.e., $E_Y \otimes N = E_Y, N = N_1 \otimes N_2^{-1}$. To prove the injectivity of $\alpha_Y$, it suffices to prove that $E_Y \otimes N \neq E_Y$ for a nontrivial line bundle $N$.  
Recall the exact sequence \eqref{d4a} 
$$0 \to H^0(Y, L)\otimes {\mathcal O}_{Y} \to {E}_Y \to L' \to  0\, ,$$ 
where $L' $ is a line bundle of degree $r+1- 2g, h^0(Y, L) = r- g$.
This implies that $h^0(E_Y) \ge r - g.$

           Tensoring the exact sequence \eqref{d4a} by $N$ and taking the cohomology exact sequence, we get an exact sequence 
$$0 \to  H^0(Y, L) \otimes H^0(Y, N)  \to H^0(Y, E_Y\otimes N) \to H^0(Y, L' \otimes N) \to  $$        
Since $d(N)= 0, N \neq  {\mathcal O}_{Y}$, we have $h^0(Y, N) = 0$. Then 
$h^0(Y, E_Y \otimes N) \le h^0 (Y, L' \otimes N)$. \\
Case 1: Let $r \ge 4g - 2$. Then $d(L'\otimes N) \ge 2g- 1$ so that $h^1(Y, L'\otimes N) = 0$ and  by Riemann-Roch theorem, $h^0(Y, L'\otimes N) = r +2 -3g$. Hence $h^0(Y, E_Y\otimes N) \le r + 2 -3g $. Since $r-g > r+2 -3g$ for $g \ge 2$, we get $h^0(Y, E_Y) > h^0(Y, E_Y\otimes N)$. Therefore $E_Y \ncong E_Y \otimes  N$.\\
Case 2: Let $2g \le r \le 4g-3$. Then $d(L' \otimes N)) \le 2g - 2$. By Clifford's theorem, 
$$h^0(Y, L'\otimes N) \le \frac{r+ 1 - 2g}{2} +1 = \frac{r+3}{2} - g\, .$$ 
Hence $h^0(Y, E_Y\otimes N) \le \frac{r+3}{2} - g < r - g = h^0(Y, E_Y)$ for $r >3$.  It follows that $E_Y \ncong E_Y \otimes  N$.      
\end{proof}

\begin{proposition} \label{embedX}
 The morphism $\alpha_Y$ is an embedding. 
\end{proposition}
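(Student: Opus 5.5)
Since Proposition \ref{prop1.6} gives injectivity, it remains to show that the differential of $\alpha_Y$ is injective at every point $N \in J(Y)$. This is enough for $\alpha_Y$ to be an embedding: $J(Y)$ is smooth, and once we know $\alpha_Y$ is injective and unramified, it is an isomorphism onto a locally closed subvariety in characteristic zero. The argument follows the pattern of \cite{Li} and \cite{Bh}.

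We first identify the tangent spaces. At $N \in J(Y)$ we have $T_N J(Y) = H^1(Y, \mathcal{O}_Y)$. At a stable bundle $F = E_Y \otimes N$ we have $T_F U^s_Y(r_Y,d_Y) = H^1(Y, End(F)) = H^1(Y, End(E_Y))$; the second equality holds because $End(E_Y\otimes N) = End(E_Y)$. The family $\mathcal{V}(t)$ is obtained by tensoring with the Poincar\'e family, so the Kodaira--Spencer map of $\alpha_Y$ at $N$ is the map
$$H^1(Y, \mathcal{O}_Y) \longrightarrow H^1(Y, End(E_Y))$$
induced by the inclusion of sheaves $\mathcal{O}_Y \hookrightarrow End(E_Y)$, $s \mapsto s\cdot \mathrm{id}$.

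This inclusion splits, because the trace map divided by $r_Y$ is a retraction; here we use characteristic zero. Hence
$$End(E_Y) = \mathcal{O}_Y \oplus End_0(E_Y),$$
and the induced map on $H^1$ is the inclusion of a direct summand, so it is injective. Consequently $d\alpha_Y$ is injective at every point.

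Note that this step does not use the hypothesis in \cite{Li} about $End(E_0)$ having no line bundle summands. That hypothesis was only needed for injectivity, which Proposition \ref{prop1.6} has already handled using $h^0$ estimates. It also does not need stability of the pullback of $E_Y$ to the normalisation $X$, since the argument takes place entirely on $Y$ and only uses that $E_Y$ is locally free of rank $r_Y$.

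The main obstacle is to justify carefully that the differential of $\alpha_Y$ really is the map induced by $\mathcal{O}_Y \to End(E_Y)$. On a nodal curve, $J(Y)$ is a generalised Jacobian, so we must check that $T_N J(Y) = H^1(Y,\mathcal{O}_Y)$ with no correction coming from the nodes. This holds because $J(Y)$ is smooth and represents line bundles, whose deformations are governed by $H^1(\mathcal{O}_Y)$. We must also check that the deformation theory of the stable locally free sheaf $F$ is governed by $H^1(End\, F)$, which is valid for locally free sheaves on $Y$.

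Once these identifications are in place, injectivity of $d\alpha_Y$ together with Proposition \ref{prop1.6} gives the result. If one also wants $\alpha_Y$ to be a closed embedding, one would additionally need properness of $\alpha_Y$ (for instance when $Y$ is smooth, where $J(Y)$ is complete); otherwise we obtain a locally closed embedding.
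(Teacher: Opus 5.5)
Your argument is essentially the paper's. The paper composes $\alpha_Y$ with $\det$ and uses that $\det\circ\alpha_Y$ is a translate of multiplication by $r_Y$, hence \'etale. Since the differential of $\det$ at $E$ is $H^1(\mathrm{tr})$, your retraction $\frac{1}{r_Y}\mathrm{tr}$ of $\mathcal{O}_Y\hookrightarrow End(E_Y)$ is the infinitesimal form of that argument; it is the deformation-theoretic variant the paper cites from \cite[Remark 1.7]{Li}.

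One correction, which applies to the paper as well. ``Injective and unramified'' does not imply ``isomorphism onto a locally closed subvariety'': the map $t\mapsto(t^2-1,\,t(t^2-1))$ from $\mathbb{A}^1\setminus\{-1\}$ onto the nodal cubic is a counterexample. So for nodal $Y$, where $J(Y)$ is not complete, your claim of a locally closed embedding needs an extra properness-type input. For smooth $Y$, completeness of $J(Y)$ gives a closed embedding, as you note.
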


\begin{proof}
    The Zariski tangent space to $J(Y)$ at a point $N\in J(Y)$ is Ext$^1(N,N) = H^1(Y, N^*\otimes N) = H^1(Y, \mathcal{O}_Y)$. The Zariski tangent space to $U^s_Y(r_Y, d_Y)$ at a point $E \in U^s_Y(r_Y, d_Y)$ is Ext$^1(E,E) = H^1(Y, End(E))$. 
    The injectivity of $d(\alpha_Y)$, the induced mapping of tangent spaces, can be proved using the determinant map $det : U_Y(r_Y, d_Y)  \to J^{d_Y}(Y)$ as in \cite[Proposition 1.6]{Li} or using deformation theory as in \cite[Remark 1.7]{Li}. 
Since $det \ \circ \ \alpha_Y(N) = det(E_Y) \otimes N^{r_Y}$, upto translation this composite map is the map $J(Y) \to J(Y)$ defined by multiplication by $r_Y$. The latter being an \'etale map, the induced map on tangent spaces is injective and hence the tangent map $d \alpha_Y$ is injective.         
\end{proof}

\subsection{The morphism $\alpha^B_Y$} \hfill

       Let $B$ be a line bundle of degree $b \ge 2g -1$ on the curve $Y$. Tensoring with $B$ defines an isomorphism $t_B: U^s_Y(r_Y, d_Y) \to U^s_Y(r_Y, d^b_Y)$ where $d^b_Y = d_Y+ r_Y b$. Define $\alpha^B_Y= t_B \circ \alpha_Y$, 
$$\alpha^B_Y : J(Y) \to U^s_Y(r_Y, d^b_Y)\, ; \ \alpha^B_Y(N) = N \otimes E_Y\otimes B\, .$$ 
Let $t'_B : J(Y) \to J^B(Y)\, ; \ t'_B (N) = N \otimes B$. Then $\alpha^B_Y$ factors through $t'_B$.
 
         Let $\theta_U$ denote the ample theta line bundle on  $U^s_Y(r_Y, d^b_Y)$. 
We have the greatest common divisor 
$$n := (r_Y, d^b_Y) = (r_Y, d_Y) = (r+1-g, r+1-2g) = (r+1, g)\, .$$

\begin{theorem} \label{theta_Ures}

       Assume that $Y$ is smooth. Let $n = (r+1, g)$. Then
 $$(1) \ \  (\alpha^B_Y)^* \theta_U \ = \ \frac{r_Y^2}{n} \ \theta\, ,$$
 $$(2)  \ \ \alpha^B_Y(Y) \cdot \theta_U \ =  \ \frac{r_Y^2}{n} \ g \, .$$    

\end{theorem}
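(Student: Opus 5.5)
We compute the pullback of the theta line bundle through its standard description via a family. Recall that on $U^s_Y(r_Y,d^b_Y)$ the theta bundle is characterised, for any family $\mathcal{F}$ on $Y\times S$ of stable bundles of rank $r_Y$ and degree $d^b_Y$, by
\[
\phi_{\mathcal F}^*\theta_U \;=\; \big(\det R{p_S}_*(\mathcal{F}\otimes p_Y^*F_0)\big)^{-1},
\]
where $F_0$ is a fixed vector bundle on $Y$ with $\chi(\mathcal{F}_s\otimes F_0)=0$. The minimal such choice has $\operatorname{rk} F_0 = r_Y/n$ and $\deg F_0 = (g-1)r_Y/n - d^b_Y/n$, with $n=(r_Y,d^b_Y)$; this normalisation produces the generator $\theta_U$ rather than a power of it.

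We apply this to the family
\[
\mathcal{F} = p_Y^*(E_Y\otimes B)\otimes \mathcal{L}\vert_{Y\times J(Y)},
\]
which realises $\alpha^B_Y$ because $\mathcal{L}\vert_{Y\times\{N\}} = N$. We then compute $c_1$ of the determinant of cohomology by Grothendieck--Riemann--Roch on $Y\times J(Y)$. Write $c_1(\mathcal{L}) = \gamma$, the class of the normalised Poincar\'e bundle, so that $\gamma\vert_{t\times J}=0$ and $\gamma\vert_{Y\times N}=0$. The standard facts are $\gamma^2 = -2\,[\text{pt}_Y]\cdot\theta$ and $\gamma^3=0$. Set
\[
G:=E_Y\otimes B\otimes F_0, \qquad \operatorname{rk} G = r_Y^2/n, \qquad \deg G = \frac{r_Y^2}{n}(g-1).
\]
Then
\[
\operatorname{ch}(\mathcal F\otimes F_0) = \operatorname{ch}(p_Y^*G)\,\big(1+\gamma+\tfrac{\gamma^2}{2}\big).
\]
Multiply by $\operatorname{td}(Y)$ and push forward along $p_J$. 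The degree-one part on $J(Y)$ receives two contributions: the term $\operatorname{rk}G\cdot\gamma^2/2 = -\operatorname{rk}G\cdot\theta$, and a term linear in $\gamma$, which pushes forward to zero because $\gamma$ has K\"unneth type $(1,1)$. Hence
\[
c_1\big(R{p_J}_*(\mathcal F\otimes F_0)\big) = -\frac{r_Y^2}{n}\,\theta,
\]
and inverting the determinant gives $(\alpha^B_Y)^*\theta_U = \frac{r_Y^2}{n}\theta$, which is (1).

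For (2), use the projection formula:
\[
\alpha^B_Y(Y)\cdot\theta_U = Y\cdot(\alpha^B_Y)^*\theta_U = \frac{r_Y^2}{n}\,(Y\cdot\theta) = \frac{r_Y^2}{n}\,g,
\]
since $\theta\cdot Y=g$, as noted in the proof of Proposition \ref{VL1}. This also requires $\alpha^B_Y\vert_Y$ to be birational onto its image, which holds because $\alpha^B_Y$ is an embedding by Propositions \ref{prop1.6} and \ref{embedX}.

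The main obstacle is the normalisation: checking that the minimal $F_0$ gives exactly $\theta_U$, as in Drezet--Narasimhan, and that no factor of $n$ is lost. The other delicate point is the sign and value of $\gamma^2$. I would verify both on the case $r_Y=1$, where the formula must recover $\theta$ itself.
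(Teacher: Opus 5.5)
Your proposal is correct and follows essentially the same route as the paper. Both arguments describe $\theta_U$ as the inverse determinant of cohomology twisted by an auxiliary bundle of rank $r_Y/n$ and degree $((g-1)r_Y-d^b_Y)/n$, apply Grothendieck--Riemann--Roch with $1+\gamma-\eta\theta$ for the normalised Poincar\'e class so that only $-\frac{r_Y^2}{n}\eta\theta$ survives the pushforward, and use $\theta\cdot Y=g$ for part (2). The only cosmetic difference is that you absorb $B$ into the fixed bundle and work with the degree-$0$ Poincar\'e bundle on $Y\times J(Y)$ instead of passing through $J^b(Y)$; this makes the bookkeeping cleaner, since $\chi(G)=0$ kills all $\eta$-terms outright.
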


\begin{proof}
(1) Let 
                   $$\mathcal{N}' \longrightarrow Y \times J^b(Y)$$ 
be the normalised universal family of line bundles of degree $b$ on $Y$ parametrised by $J^b(Y)$,   the Jacobian of line bundles of degree $b$ on $Y$.
We have an isomorphism $t'_B: J(Y) \to J^b(Y): N \mapsto N' = N \otimes B$. Then $\alpha^B_Y= \alpha'_Y \circ t'_B$ where
$$\alpha'_Y: J^b_Y \longrightarrow U^s_Y(r_Y, d^b_Y) \ \ ; \alpha'_Y(N') = p_1^*(E_Y) \otimes N'\, .
$$
 We note that  $\alpha'_Y$ is an embedding as $\alpha^B_Y$ is so. It suffices to prove the theorem for the pair $(J(Y), \alpha^B_Y)$ replaced with the pair $(J^b(Y), \alpha'_Y)$.    
 
           Since the universal family $p_1^*(E_Y)\otimes \mathcal{N}'$ over $Y \times J^b(Y)$ gives the morphism $\alpha'_Y$, we have 
\begin{equation} \label{thetaU1}
(\alpha'_Y)^* \theta_U \cong Det (p_{2 !} (p_1^*(E_Y)\otimes \mathcal{N}') )^{-1}\, ,
\end{equation}    
$Det$ denoting the determinant of cohomology for the family. We shall compute the right hand side of this equation.
 

       Let $F$ be a vector bundle of rank $r(F) = r_Y/n$ and degree  $d(F) = \frac{-d^b_Y+ r_Y(g-1)}{n}$.  Applying the Grothendieck Riemann-Roch theorem to the morphism $p_2: Y \times J^b(Y) \to J^b(Y)$, one gets
$$ch(p_{2 !} (\ p_1^*(E_Y \otimes F) \otimes \mathcal{N}') \cdot td(J^b(Y))
= p_{2 *} (ch(p_1^*(E_Y \otimes F) \otimes \mathcal{N}') \cdot td(Y \times J^b(Y))\, .$$
Since $td(J^b(Y)) = 1$, we have 
$$
\begin{array}{ll}
ch(p_{2 !} \ (p_1^*(E_Y \otimes F) \otimes \mathcal{N}')) & {}\\
 {} &  =   p_{2 *} (ch(p_1^*(E_Y \otimes F) \otimes \mathcal{N}') \cdot td(Y \times J^b(Y)))\\
 {} & =   p_{2 *} (ch(\mathcal{N}') ch(p_1^*(E_Y \otimes F)) \cdot td(Y \times J^b(Y)))
\end{array}
$$  
By \cite[Lemma 4.4(a)]{Li} (or \cite[pp. 334 - 336]{ACGH}), $ch(\mathcal{N}' )= (1 + b \eta + \gamma - \eta \theta)$ where $\eta$ is the pull-back by $p_1$ of a point in $Y$, $\theta$ is the theta divisor on $J^b(Y)$ and $\gamma$ is a divisor whose poincar\'e duel lies in $H^1(Y, \mathbb{Z}) \otimes H^1(J^b(Y), \mathbb{Z})$,  and one has $\eta \cdot \gamma = 0$. We have $ch(Y \times J^b(Y)) = 1+ (1-g) \eta, ch(p_1^*(E_Y \otimes F)) = r(E_Y \otimes F) + d(E_Y \otimes F) \eta = r_Y^2/ n + r_Y^2(g-1)\eta/ n$. Substituting, we get 
\begin{equation} \label{thetaU2}
ch(p_{2 !} (p_1^*(E_Y \otimes F) \otimes \mathcal{N}') )
= p_{2 *}((1 + b \eta + \gamma - \eta \theta)\cdot (r_Y^2/n + r_Y^2(g-1) \eta/ n) \cdot (1 + (1-g) \eta))\, .
\end{equation}

By \cite[Lemma 4.4(d)]{Li}, $p_{2 *} \gamma = 0$. One has $\eta^2 = 0$. It is easy to see that the only term in the right hand side of equation \eqref{thetaU2} which gives a non-zero term in $H^2(J^b(Y), \mathbb{Z})$ is   
$p_{2 *} ( - (r_Y^2/n) \eta \theta) = -  (r_Y^2/ n) \ \theta$. Hence $Det (p_{2 !} ( p_1^*(E_Y)\otimes \mathcal{N}') )^{-1} \cong  (r_Y^2/ n) \ \theta$. From \eqref{thetaU1}, it follows that  $(\alpha^B_Y)^* \theta_U \ = \ \frac{r_Y^2}{n} \ \theta\, .$
 \\
(2) By Poincar\'e formula, $\theta \cdot (Y) = g$, hence the part (2) follows from the part (1). 

\end{proof}

\subsection{The restrictions of Poincar\'e and Picard bundles} \hfill

 In this subsection, we assume that $(r+1, g) = 1$ i.e., $(r_Y, d^b_Y)= 1$.  
 
 Then $d^b_Y$ and $r_Y$ being coprime there is  a universal bundle on $U^s_Y(r_Y, d^b_Y)$, unique upto a line bundle from $U^s_Y(r_Y, d^b_Y)$. Fix a universal bundle 
$$\mathcal{U} \ \longrightarrow \ Y \times  U^s_Y(r_Y, d^b_Y)\, .$$

We may identify $J(Y)$ with its image in $U^s_Y(r_Y, d^b_Y)$ under $\alpha^B_Y$ for $d(B) \ge 2g -1$. 

\begin{theorem} \label{univbundleres}

       The restriction of the universal bundle $\mathcal{U}$ to $Y \times J(Y)$ is stable for any polarisation of the form $a_1 H_1+ a_2 H_2$, where $a_1, a_2$ are positive integers,  $H_1$ and $H_2$ are any polarisations on $Y$ and $J(Y)$ respectively.
\end{theorem}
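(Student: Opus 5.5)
We first identify the restricted bundle concretely. Since $\mathcal{U}$ is universal on $Y \times U^s_Y(r_Y,d^b_Y)$ and $J(Y)$ is embedded by $\alpha^B_Y$, the restriction $\mathcal{U}\vert_{Y\times J(Y)}$ and the family $\mathcal{W}:=p_1^*(E_Y\otimes B)\otimes \mathcal{N}$ both classify $\alpha^B_Y$. Here $\mathcal{N}$ is the Poincar\'e bundle on $Y\times J(Y)$, equivalently $\mathcal{V}(t)\otimes p_1^*B$ restricted to $Y\times J(Y)$. By the universal property, and the fact that stable bundles are simple, we get
$$\mathcal{U}\vert_{Y\times J(Y)} \cong p_1^*(E_Y\otimes B)\otimes \mathcal{N}\otimes p_2^*M$$
for some line bundle $M$ on $J(Y)$. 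Tensoring by the line bundle $p_2^*M$ does not affect stability with respect to any polarisation. So it suffices to prove that $\mathcal{W}$ is stable for $H=a_1H_1+a_2H_2$.

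The plan is to use the standard argument for families: a bundle on a product whose restriction to every fibre $Y\times\{x\}$ is stable, and whose restriction to the other fibres $\{y\}\times J(Y)$ is also well behaved, is stable for product polarisations. Concretely, let $\mathcal{F}\subset \mathcal{W}$ be a saturated subsheaf with $0<\mathrm{rk}\,\mathcal{F}<r_Y$. We compute $\mu_H(\mathcal{F})=c_1(\mathcal{F})\cdot H^{g}/\mathrm{rk}\,\mathcal{F}$ on the $(g+1)$-dimensional variety $Y\times J(Y)$. We decompose $c_1(\mathcal{F})$ via the K\"unneth decomposition of $NS(Y\times J(Y))$ into a part pulled back from $Y$, a part pulled back from $J(Y)$, and a correspondence part of type $H^1\otimes H^1$. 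When $H^g=(a_1\eta+a_2H_2)^g$ is expanded, only the terms $a_2^g H_2^g$ (which sees the degree on the fibre $Y\times\{x\}$) and $g a_1a_2^{g-1}\eta H_2^{g-1}$ (which sees $c_1$ on $\{y\}\times J(Y)$) survive. The correspondence part integrates to zero against these classes, as in the computation with $\gamma$ in Theorem \ref{theta_Ures}.

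For the fibre term: for general $x$, $\mathcal{F}\vert_{Y\times x}\subset E_Y\otimes B\otimes N_x$ is a subsheaf of a stable bundle. This gives $\deg(\mathcal{F}\vert_{Y\times x})/\mathrm{rk}\,\mathcal{F} < d^b_Y/r_Y$. Since $(r_Y,d^b_Y)=1$, we even have $\deg\mathcal{F}_x\cdot r_Y - \mathrm{rk}\,\mathcal{F}\cdot d^b_Y\le -1$.

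For the other term: $\mathcal{W}\vert_{y\times J(Y)}\cong \mathcal{O}^{r_Y}\otimes \mathcal{N}_y$, where $\mathcal{N}_y$ is a fixed line bundle algebraically equivalent to zero. This is semistable of slope $0$ for $H_2$, so $c_1(\mathcal{F}\vert_{y\times J})\cdot H_2^{g-1}\le 0 = \mathrm{rk}\,\mathcal{F}\cdot\mu(\mathcal{W}_y)$. Adding the two contributions with positive weights $a_2^gH_2^g[\mathrm{pt}]$ and $ga_1a_2^{g-1}$ gives $\mu_H(\mathcal{F})<\mu_H(\mathcal{W})$, since the first inequality is strict and the second is weak. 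This yields stability for every choice of $a_1,a_2>0$.

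The main obstacle is the careful intersection-theoretic bookkeeping. One must check that $\mathcal{W}$ and $\mathcal{F}$ have no correspondence contribution to $c_1\cdot H^g$; for $\mathcal{F}$ this relies on pushing forward classes in $H^1(Y)\otimes H^1(J)$ against $\eta$ and $H_2$, which vanish by degree reasons. One must also justify restricting a saturated subsheaf to general fibres in both directions, where it is a subbundle off codimension two, so that its degrees are computed fibrewise. A secondary point is confirming the identification of $\mathcal{U}\vert_{Y\times J(Y)}$ with $\mathcal{W}$ up to $p_2^*M$, which uses that $\alpha^B_Y$ is an embedding (Proposition \ref{embedX}) and the uniqueness of the universal bundle in the coprime case.
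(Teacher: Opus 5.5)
Your proposal is correct and follows the paper's route. The paper also writes the restriction as $\mathcal{N}\otimes p_1^*(E_Y\otimes B)\otimes p_2^*M$, notes that it is stable on each $Y\times N$ and semistable (being $\mathcal{N}_y\otimes M\otimes I_{r_Y}$) on each $y\times J(Y)$, and then invokes \cite[Lemma 2.2]{BaBrN}. The only difference is that you carry out that lemma's intersection-theoretic argument by hand (only the $H_2^g$ and $H_1H_2^{g-1}$ terms of $H^g$ survive) instead of citing it.
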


\begin{proof}
    The restriction of the universal bundle $\mathcal{U}$ to $Y \times J(Y)$ is (isomorphic to) $(id_Y \times \alpha^B_Y)^* \mathcal{U}$. Let $\mathcal{N}$ denote the universal bundle on $Y \times J(Y)$ normalised by $\mathcal{N}\vert_{t \times J(Y)} = \mathcal{O}_{J(Y)}$. For $N \in J(Y)$,  we have 
$(id_Y \times \alpha^B_Y)^* \mathcal{U}\vert_{Y \times N} = N \otimes E_Y\otimes B = (\mathcal{N} \otimes p_1^*(E_Y \otimes B))\vert_{Y\times N}$. It follows that there exists a line bundle $M$ on $J(Y)$ such that 
\begin{equation} \label{eqA}
(id_Y \times \alpha^B_Y)^* \mathcal{U} \cong \mathcal{N} \otimes p_1^*(E_Y \otimes B) \otimes p_2^*M\, .
\end{equation}      

For $N \in J(Y), (id_Y \times \alpha^B_Y)^* \mathcal{U}\vert_{Y \times N} \cong  N \otimes E_Y\otimes B$ is a stable vector bundle on $Y$ as $E_Y$ is stable on $Y$. For $y \in Y,  (id_Y \times \alpha^B_Y)^* \mathcal{U}\vert_{y \times J(Y)} \cong (\alpha^B_Y)^* \mathcal{U}_y$ where $\mathcal{U}_y = \mathcal{U}\vert_{y \times J(Y)}$ regarded as a vector bundle on $J(Y)$. We have $(\mathcal{N} \otimes p_1^*(E_Y \otimes B) \otimes p_2^*M)\vert_{y \times J(Y)} \cong \mathcal{N}_y \otimes M \otimes I_{r_Y}$ (where $I_{r_Y}$ is a trivial vector bundle of rank $r_Y$ on $J(Y)$), the latter vector bundle is semistable for any polarisation on $J(Y)$. Hence by equation \eqref{eqA} restricted to $y \times J(Y),  (id_Y \times \alpha^B_Y)^* \mathcal{U}\vert_{y \times J(Y)}$ is semistable for any polarisation on $J(Y)$. As in \cite[Lemma 2.2]{BaBrN}, it follows that the restriction $(id_Y \times \alpha^B_Y)^* \mathcal{U}\vert_{Y \times J(Y)}$ is stable for any polarisation of the form $a_1 H_1+ a_2 H_2$, where $a_1, a_2$ are positive integers, $H_1$ and $H_2$ are any polarisations on $Y$ and $J(Y)$ respectively.
\end{proof}

\begin{remark}
      The line bundle $\theta$ on $\overline{J}(Y)$ is ample, hence its pull back $\tilde{\theta}$ to $\tilde{J}(Y)$ is ample. Suppose that its restriction to the open subset $J(Y) \subset \tilde{J}(Y)$ is ample. The fibres of the morphism $J(Y) \to J(X)$ are closed subvarieties of  $J(Y)$, so $\tilde{\theta}\vert_{J(Y)}$ must restrict to ample line bundles on the fibres. The  variety $J(Y)$ is a $(\mathbb{C}^*)^k$-bundle over $J(X)$. Hence the restriction of $\tilde{\theta}$ to $J(Y)$ is trivial on the fibres, giving a contradiction. It follows that $\theta$ restricted to $J(Y)$ is not ample. 
\end{remark}

Define the Picard bundle on $U^s_Y(r_Y, d^b_Y)$ by 
$$E_{r_Y, d^b_Y} \ := \ p_{U *} \mathcal{U}$$
where $p_U: Y \times U^s_Y(r_Y, d^b_Y) \to U^s_Y(r_Y, d^b_Y)$ is the projection.

\begin{theorem} \label{picardbundleres}
Assume that $Y$ is a smooth curve and $(r+1, g) =1$. Then
the restriction of the Picard bundle $E_{r_Y, d^b_Y}$ to $J(Y)$ is $\theta$-semistable for $b \ge 2g-1$ and $\theta$-stable for $b \ge 2g$.
\end{theorem}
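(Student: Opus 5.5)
By \eqref{eqA}, the restriction of $E_{r_Y,d^b_Y}$ to $J(Y)$ is, by base change,
$$W\otimes M,\qquad W := p_{2*}\big(\mathcal{N}\otimes p_1^*F\big),\qquad F := E_Y\otimes B .$$
Base change applies because $H^1(Y, N\otimes F)=0$ for all $N\in J(Y)$. Indeed $F$ is stable of slope
$$\mu(F)= b+\frac{r+1-2g}{r+1-g} > 2g-2 .$$
Twisting by the line bundle $M$ does not affect $\theta$-(semi)stability. So the plan is to show that the generalised Picard bundle $W$ of the stable bundle $F$ is $\theta$-semistable, and $\theta$-stable when $b\ge 2g$. $W$ is locally free of rank $\chi(F)=d^b_Y+r_Y(1-g)$. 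Its first Chern class comes from Grothendieck--Riemann--Roch, exactly as in the proof of Theorem \ref{theta_Ures}: $c_1(W)=-r_Y\,\theta$. So
$$\mu_\theta(W)=-\,\frac{r_Y\,\theta^g}{\operatorname{rk} W}.$$

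\textbf{Step 1: reduce to a curve.} I follow the Ein--Lazarsfeld strategy and restrict $W$ to the Abel--Jacobi curve $Y\subset J(Y)$. By the Poincar\'e formula its class is $\theta^{g-1}/(g-1)!$. For a saturated subsheaf $W'\subset W$, the $\theta$-degree $c_1(W')\cdot\theta^{g-1}$ is a fixed positive multiple of $\deg(W'|_Y)$. Moreover, translates of $Y$ cover $J(Y)$, so we may choose a translate along which $W'$ is a subbundle. Hence $\theta$-(semi)stability of $W$ follows from (semi)stability of $W|_{Y'}$ for a general translate $Y'$ of $Y$. Translating $Y$ amounts to replacing $F$ by $F\otimes N_0$ with $N_0\in J(Y)$, which preserves all hypotheses, so it suffices to treat $Y$ itself.

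\textbf{Step 2: identify $W|_Y$.} The fibre of $W$ at the point $y\in Y\subset J(Y)$ is $H^0(Y,F(y-t))$. Pushing forward the sequence
$$0\to F(-t)\to F(\Delta - t)\to F(\Delta-t)|_{\Delta}\to 0$$
on $Y\times Y$, where $\Delta$ is the diagonal, gives
$$0\to H^0(F(-t))\otimes\mathcal{O}_Y\to W|_Y\to F\otimes(\text{line bundle})\to 0 .$$
This is the analogue of \eqref{d4a}. Dualising, $(W|_Y)^*$ is, up to a twist by a line bundle, the kernel bundle
$$M_{F'}=\ker\big(H^0(F')\otimes\mathcal{O}_Y\to F'\big)$$
of the evaluation map of a twist $F'$ of $F$ of slope $\mu(F)-1$ or $\mu(F)$. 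I would check this by the same Fourier--Mukai computation used for \eqref{d4a} in \cite{EL}, \cite{BhP2}.

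\textbf{Step 3 (main obstacle).} It remains to prove (semi)stability of $M_{F'}$. I would invoke Butler's theorem: for $G$ stable (resp.\ semistable) with $\mu(G)>2g$ (resp.\ $\ge 2g$), $M_G$ is stable (resp.\ semistable). Matching the thresholds exactly is the delicate point. Since $0<\mu(E_Y)<1$, the condition $b\ge 2g$ gives $\mu>2g$ and hence stability. For $b=2g-1$ we only have $2g-1<\mu(F)<2g$. There I would not quote Butler directly. Instead I would run his argument by hand: a destabilising subbundle $S\subset M_{F'}$ yields, via the image of $H^0$, a subsheaf of $F'$ generated by at least $\operatorname{rk} S+1$ sections. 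I would then bound its degree using the stability of $F$ together with the inequality $h^0\le \deg +1 - g$ for globally generated bundles of slope $\ge 2g-1$; this gives $\mu(S)\le\mu(M_{F'})$, i.e.\ semistability. If this estimate is too weak, the fallback is to use the extension from \eqref{d4a},
$$0\to H^0(L)\otimes B\to F\to L'\otimes B\to 0,$$
which exhibits $W$ as an extension of classical Picard bundles $E_b^{\oplus(r-g)}$ and a Picard bundle of degree $b+r+1-2g$. All of these are $\theta$-stable for degree $\ge 2g-1$ by \cite{EL}, \cite[Theorem 1.5]{BhP2}. One would then compare slopes to exclude destabilising subsheaves.
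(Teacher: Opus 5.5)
\textbf{Gap in Step 2.} Your reduction matches the paper's. By \eqref{eqA} and base change, the restriction is $W\otimes M$ with $W=p_{2*}(\mathcal{N}\otimes p_1^*(E_Y\otimes B))$, and the twist by $M$ is harmless. At that point the paper simply quotes \cite[Proposition 4.1]{BiBrN} for the $\theta$-(semi)stability of $W$. You instead try to reprove that input by restricting to an Abel--Jacobi curve and applying Butler's theorem. Step 1 is fine, but Step 2 is false as stated.

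On $Y\subset J(Y)$, embedded by $y\mapsto\mathcal{O}_Y(y-t)$, you correctly get $0\to H^0(F(-t))\otimes\mathcal{O}_Y\to W|_Y\to F\otimes\omega_Y^{-1}(-t)\to 0$. However, $(W|_Y)^*$ is not a line-bundle twist of $M_{F\otimes A}$ with $\deg A\in\{-1,0\}$. Since $F\otimes A$ is stable of slope $>2g-2$, one has $\mathrm{rk}\,M_{F\otimes A}=h^0(F\otimes A)-r_Y=\chi(F)+(\deg A-1)r_Y$. This equals $\mathrm{rk}\,W=\chi(F)$ only when $\deg A=1$. So the threshold bookkeeping in Step 3 (slope $\mu(F)$ or $\mu(F)-1$, and the special case $b=2g-1$) rests on a wrong identification.

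Neither fallback rescues it. The hand-run Butler argument is only a sketch, and $h^0\le\deg+1-g$ is the line-bundle form of the bound. The extension $0\to W_B^{\oplus(r-g)}\to W\to W_{L'\otimes B}\to 0$ has $\theta$-stable pieces with the sub of smaller slope. Without control of the extension class it says nothing about semistability; a split extension is already unstable.

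\textbf{How to repair it.} Restrict instead to the curve $-Y$, embedded by $y\mapsto\mathcal{O}_Y(t-y)$. It has the same class $\theta^{g-1}/(g-1)!$. Its translates cover $J(Y)$, and translating corresponds to replacing $F$ by $F\otimes N_0$, so your Step 1 goes through verbatim.

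Since $h^1(F(t-y))=0$ for all $y$ and $F(t)$ is globally generated, base change gives $W|_{-Y}\cong p_{2*}\big(p_1^*F(t)\otimes\mathcal{O}_{Y\times Y}(-\Delta)\big)\otimes\mathcal{O}_Y(t)=M_{F(t)}\otimes\mathcal{O}_Y(t)$. This is consistent with $c_1(W)\cdot[-Y]=-r_Yg$.

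Now $F(t)$ is stable with $\mu(F(t))=b+1+\mu(E_Y)>2g$ for every $b\ge 2g-1$. Butler's theorem therefore makes every such restriction stable. Step 1 then gives $\theta$-stability of $W$, hence of $E_{r_Y,d^b_Y}|_{J(Y)}$, already for all $b\ge 2g-1$, which implies both assertions of the theorem. With this correction you would have a self-contained proof of what the paper imports from \cite{BiBrN}.
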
 

\begin{proof}
We have the commutative diagram 
$$
\begin{array}{ccc}
Y \times J(Y) & \longrightarrow & Y \times U^s_Y(r_Y, d^b_Y)\\
{}                    &   {}                     &   {}  \\
p_2 \downarrow     &  {}  &  \downarrow  p_U\\
{}                    &   {}                     &   {}  \\
J(Y)                & \longrightarrow &  U^s_Y(r_Y, d^b_Y)
\end{array}
$$
Hence, by the projection formula,  the restriction 
$$E_{r_Y, d^b_Y}\vert_{J(Y)} := (\alpha^B_Y)^* p_{U *} \mathcal{U} \cong p_{2 *} (id_Y \times \alpha^B_Y)^*  \mathcal{U} \, .$$  
Using equation \eqref{eqA}, 
$$p_{2 *}(id_Y \times \alpha^B_Y)^*  \mathcal{U} \cong p_{2 *} (\mathcal{N} \otimes p_1^*(E_Y \otimes B) \otimes p_2^*M) \cong p_{2 *} (\mathcal{N} \otimes p_1^*(E_Y \otimes B)) \otimes M\, .$$
 By \cite[Proposition 4.1]{BiBrN},  the generalised Picard bundle $p_{2 *} (\mathcal{N} \otimes p_1^*(E_Y \otimes B))$ on $J(Y)$ is $\theta$-semistable for $b \ge 2g-1$ and $\theta$-stable for $b \ge 2g$, hence the result follows. 
\end{proof}

\subsection{Does the morphism $\alpha_Y$ extend to a morphism on $\overline{J}(Y)$?} \hfill
   
   Let $p^S: X^S  \to Y$ be the partial normalisation of the nodal curve $Y$ (with $k$ nodes) obtained by blowing up $s$ number of nodes, say $y_1, \cdots, y_s$, in $Y$. 
   We have a sequence of morphisms 
   $$X^S= X^S_0  \longrightarrow X^S_1 \longrightarrow \cdots \longrightarrow  X^S_s = Y\, ,$$  
where the nodal curve $X^S_j$ is obtained from $X^S_{j-1}$ by identifying smooth points $x_j$ and $z_j$ in $X^S_{j-1}$ to a node $y_j$. The points of $X^S_0$ lying over $x_j$ and $z_j$ will again be denoted by $x_j$ and $z_j$ respectively. 

                     There is a Poincar\'e sheaf ${\mathcal L}^S$ on $\overline{J}(X^S) \times X^S$ normalised by the condition that ${\mathcal L}^S\vert_{\overline{J}(X^S) \times t}$ is the trivial line bundle. It is locally free over $X^S \times x_j$ and $X^S \times z_j, j= 1, \cdots , s$, where $x_j$ and $z_j$  are the points of $X^S$ lying over $y_j$. Let ${\mathcal L}^S_{x_j} = {\mathcal L}^S\vert_{\overline{J}(X^S) \times x_j}$ and ${\mathcal L}^S_{z_j} = {\mathcal L}^S\vert_{\overline{J}(X^S) \times z_j}$, regarded as line bundles on $\overline{J}(X^S)$. Define 
$P_j := \mathbb{P}({\mathcal L}^S_{x_j} \oplus {\mathcal L}^S_{z_j} )$. 
For $j = 0, 1, \cdots, s$, define 
$$\tilde{J}^S(X^S_0) := \overline{J}(X^S)\, ; \tilde{J}^S(X^S_j) := P_1 \times_{\overline{J}(X^S)} \cdots \times_{\overline{J}(X^S)} P_j\, ,$$     
a $\mathbb{P}^1 \times \cdots \times \mathbb{P}^1 (j$-fold product)-bundle on over  $\overline{J}(X^S)$.  Let 
$$\pi_j: \tilde{J}^S(X^S_j) \to \tilde{J}^S(X^S_{j-1})$$ 
be the projection and 
$$\pi^s =  \pi_1 \circ \cdots \circ  \pi_s : \tilde{J}^S(Y) \to \overline{J}(X^S)$$ 
be the composite. 

           Assume that $d \ge 2g$. Let  $E^S_{d,0}$ be the Picard bundle on $\overline{J}(X^S)$. For $j= 1, \cdots, s$, let $E^S_{d, j}$  be the pull back of the Picard bundle on $\overline{J}(X^S_j)$ to $\tilde{J}(X^S_j)$.  As in \cite[Proposition 5.1]{BhP2}, we can show that there is an exact sequence 
$$0 \to E^S_{d, j} \to \pi_j^* E^S_{d, j-1} \to \mathcal{O}_{P_j}(1)  \to 0\, .$$
      For this, in the proof of  \cite[Proposition 5.1]{BhP2}, we only need to replace $\tilde{J}^0(X_k)$ by $\tilde{J}^S(X^S_j)$ and $\tilde{J}^0(X_{k-1})$ by $\tilde{J}^S(X^S_{j-1})$. We omit the details to avoid repetition.  As in \cite[Proposition 5.2]{BhP2}, replacing $X_0$ by $X^S$ etc., we get an exact sequence 
\begin{equation} \label{*2}
0 \to E^S_{d, s} \stackrel{i^S_s}{\longrightarrow} \pi^{s *} E^S_{d, 0} \stackrel{g^S_s}{\longrightarrow} \oplus_j \mathcal{O}_{P_j}(1)  \to 0\, ,
\end{equation}             
here  $\mathcal{O}_{P_j}(1)$ denotes the pull back of $\mathcal{O}_{P_j}(1)$ to $\tilde{J}^S(Y)$ by $\pi^S$. 

        Let $\tilde{\mathcal L}^S: \tilde{J}^S(Y) \times Y$ be the pull back of the Poincar\'e bundle $\mathcal{L}$. Let $\nu_S: \tilde{J}^S(Y) \times Y \to \tilde{J}^S(Y)$ be the first projection. Recall that $X^S_s = Y$.  As in \cite[Propositions 5.3]{BhP2}, we have an isomorphism 
\begin{equation}\label{*3}
\pi^S_* E^S_{d, s} \cong \nu^S_* [\tilde{\mathcal L}^S(d) ( - \sum_{j=1}^s (x_j + z_j))]\, .
\end{equation}
Let $F^S_{d, s} := \pi^{S  *} \pi^S_* E^S_{d, s}$. Then we also get an exact sequence 
\begin{equation} \label{*4}
0 \rightarrow F^S_{d,s} \rightarrow E^S_{d, s} \rightarrow  \oplus_j \mathcal{O}_{P_j}( - 1) \rightarrow 0\, .
\end{equation}

Define $F^S_Y := F^S_{d,s} \vert_{X^S}(t)$. 
    
\begin{proposition}      \label{XS}
Let $p^S: X^S  \to Y$ be the partial normalisation  obtained by blowing up the nodes $y_1, \cdots, y_s$ in $Y$. Then  $(p^S)^*(E_Y) = F^S_Y \oplus I^s$, where $F^S_Y$ is a stable vector bundle on  $X^S$ contradicting the semistability of $(p^S)^*(E_Y)$ and $I^s$ is the trivial vector bundle of rank $s$. 
\end{proposition}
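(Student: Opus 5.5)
Pull the exact sequences \eqref{*2} and \eqref{*4} back to the curve and split the result using the vanishing of the relevant extension group. Set $d = r$ and regard $Y$ as embedded in $\overline{J}(Y)$ via the Abel--Jacobi map. The preimage of $Y$ in $\tilde{J}^S(Y)$ under the natural map $\tilde{J}^S(Y)\to \overline{J}(Y)$ contains a copy of $X^S$, the strict transform of $Y$. Over the nodes $y_j$ this copy meets the $\mathbb{P}^1$-fibres of $P_j$ at the points corresponding to $x_j$ and $z_j$. The map $X^S \to Y$ is exactly $p^S$. Since $E^S_{d,s}$ is the pull-back of the Picard bundle $E_r$, we get $E^S_{d,s}\vert_{X^S}(t) = (p^S)^*E_Y$.

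Restricting \eqref{*4} to $X^S$ and twisting by $t$ gives
$$0 \to F^S_Y \to (p^S)^*E_Y \to \oplus_{j=1}^s \mathcal{O}_{P_j}(-1)\vert_{X^S}(t) \to 0 .$$
The first step is to identify the quotient. I expect $\mathcal{O}_{P_j}(-1)$ to restrict to $X^S$ as a line bundle of degree $0$. In the normalised setting it should be $\mathcal{O}_{X^S}(-t)$, so after the twist it becomes trivial. To see this, I would compute $\mathcal{O}_{P_j}(1)\vert_{X^S}$ from the description of $P_j = \mathbb{P}(\mathcal{L}^S_{x_j}\oplus\mathcal{L}^S_{z_j})$ along the Abel--Jacobi curve, as in \cite[Section 8]{BhP2}. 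The quotient is then $I^s$.

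Next, $F^S_Y$ is the restriction of $\pi^{S*}\pi^S_*E^S_{d,s}$. By \eqref{*3} it is the restriction to $X^S$ of the Picard bundle of degree $d-2s$ on $\overline{J}(X^S)$, twisted by $t$, and $X^S$ has arithmetic genus $g-s$. So $F^S_Y$ is the Ein--Lazarsfeld type bundle for $X^S$, and its stability follows from \cite[Theorem 8.10, 9.3]{BhP2} and \cite{EL}, applied to the curve $X^S$ with $d-2s \ge 2(g-s)-1$. One must check that the embedding of $X^S$ obtained this way agrees, up to translation, with the Abel--Jacobi embedding used in \eqref{d4a} for $X^S$.

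The remaining step, which I expect to be the main obstacle, is splitting the sequence, i.e.\ showing $\mathrm{Ext}^1(I^s, F^S_Y) = H^1(X^S, F^S_Y)^{\oplus s} = 0$. Let $E_{X^S}$ denote this restricted Picard bundle for $X^S$. Its degree is $d-2s+1-2(g-s)$ and its rank is $d-2s+1-(g-s)$, so the degree is positive. Using the analogue of \eqref{d4a} on $X^S$, namely
$$0\to H^0(X^S,L_S)\otimes\mathcal{O}\to E_{X^S}\to L'_S\to 0,$$
we get that $H^1(F^S_Y)$ is controlled by $H^1(\mathcal{O})\otimes H^0(L_S)\to H^1(E_{X^S})$ together with $H^1(L'_S)$. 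This alone does not give vanishing. So I would instead try to obtain the splitting directly from the construction: the surjection in \eqref{*2} restricted to the section over $X^S$ should admit a section, because each $\mathcal{O}_{P_j}(1)$ is a quotient of the pulled-back line bundles $\mathcal{L}^S_{x_j}\oplus\mathcal{L}^S_{z_j}$, and these become trivial on $X^S$ after normalisation. Dualising, this produces a splitting of \eqref{*4}. Finally, the decomposition contradicts semistability: $F^S_Y$ has positive slope, $I^s$ has slope $0$, and $\mu((p^S)^*E_Y) > 0$, so $I^s$ is a destabilising quotient.
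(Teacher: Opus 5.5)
Your outline is the same as the paper's. You restrict \eqref{*4} to the strict transform $X^S$, identify the quotient with $I^s$, and get stability of $F^S_Y$ via \eqref{*3} from the Ein--Lazarsfeld/\cite{BhP2} result applied to $X^S$. Your guess $\mathcal{O}_{P_j}(-1)\vert_{X^S}\cong\mathcal{O}_{X^S}(-t)$ is correct: the $j$-th kernel line is spanned by $(s_{x_j},s_{z_j})$ inside $\mathcal{O}_{X^S}(x_j-t)\oplus\mathcal{O}_{X^S}(z_j-t)$, where $s_{x_j},s_{z_j}$ are the canonical sections. The paper only asserts that the restricted sequence splits. The argument you propose for the splitting fails, for two reasons.

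\emph{First}, \eqref{*4} is not dual to \eqref{*2}. In \eqref{*2} the bundle $E^S_{d,s}$ is the subbundle, and dualising gives $0\to\oplus_j\mathcal{O}_{P_j}(-1)\to\pi^{s*}(E^S_{d,0})^*\to(E^S_{d,s})^*\to 0$. That sequence involves duals of Picard bundles and says nothing about lifting $E^S_{d,s}/F^S_{d,s}$ back into $E^S_{d,s}$.

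\emph{Second}, the premise is false. With the normalisation at $t$, $\mathcal{L}^S_{x_j}$ and $\mathcal{L}^S_{z_j}$ restrict to $\mathcal{O}_{X^S}(x_j-t)$ and $\mathcal{O}_{X^S}(z_j-t)$. Their ratio $\mathcal{O}_{X^S}(x_j-z_j)$ is nontrivial unless $X^S$ is rational, so no normalisation makes both trivial.

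You are right that $\mathrm{Ext}$-vanishing cannot rescue this. Indeed $\chi(F^S_Y)=r(F^S_Y)(2-g(X^S))-g(X^S)<0$ once $g(X^S)\ge 2$, so $\mathrm{Ext}^1(I^s,F^S_Y)\neq 0$ and an explicit lift is needed.

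The missing idea is to use the trivial subbundle $H^0(Y,L)\otimes\mathcal{O}_Y\subset E_Y$ from \eqref{d4a}. Consider the composite $H^0(Y,L)\otimes\mathcal{O}_{X^S}\to(p^S)^*E_Y\to I^s$. It is a map between trivial bundles on the irreducible projective curve $X^S$, hence a constant matrix, so it can be read off at one general point $x$ over $y\in Y_0$. There, up to a one-dimensional twist, the fibre of $(p^S)^*E_Y$ is $H^0(Y,L(y))\supset H^0(Y,L)$. The $j$-th quotient is evaluation at $y_j$, because the kernel of $F'_{x_j}\oplus F'_{z_j}\to Q_j$ is the diagonal. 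So the matrix is the evaluation map $H^0(Y,L)\to\oplus_{j=1}^s L_{y_j}$.

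This map is surjective. Write $I_S$ for the ideal sheaf of $\{y_1,\dots,y_s\}$. The cokernel injects into $H^1(Y,L\otimes I_S)$, which is dual to $\mathrm{Hom}(L\otimes I_S,\omega_Y)$. Since $\mathcal{H}om(I_S,\mathcal{O}_Y)=p^S_*\mathcal{O}_{X^S}$, this equals $H^0(X^S,(p^S)^*(\omega_Y\otimes L^{-1}))$. That group is zero, because the line bundle has degree $2g-1-r<0$ for $r\ge 2g$.

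Any linear right inverse $\oplus_j L_{y_j}\to H^0(Y,L)$ then gives $I^s\to H^0(Y,L)\otimes\mathcal{O}_{X^S}\to(p^S)^*E_Y$, which splits the sequence. Note also that the failure of semistability of $(p^S)^*E_Y$ needs only the quotient $I^s$, not the splitting.
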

\begin{proof}
        As in the proof of \cite[Theorem 6.7]{BhP2}, we can show that the vector bundle $F^S_{d, s} = \pi^{S  *} \pi^S_* E^S_{d, s}$ is a stable vector bundle on $\tilde{J}^S(X^S_s)$, in fact this is shown by showing that the restriction of  $F^S_{d, s}$ to the curve $X^S$ embedded in  $\tilde{J}^S(X^S_s)$ is stable. Hence $F^S_Y=  F^S_{d, s}\vert_{X^S} (t)$ is a stable vector bundle on $X^S$ and contradicts the semistability of $E^S_{d, s}\vert_{X^S} (t) = (p^S)^*(E_Y)$. Restricting the exact sequence \eqref{*4} to $X^S$,  we get an exact sequence $0 \to F_Y \to (p^S)^* E_Y \to I_s \to 0$ which can be shown to be a split exact sequence.  This proves the proposition.   	 	 
\end{proof}

\begin{lemma}
   The morphism $\alpha_Y$ does not extend to $\overline{J}(Y)$.
\end{lemma}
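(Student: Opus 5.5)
We argue by contradiction: a hypothetical extension $\overline{\alpha}_Y : \overline{J}(Y) \to U^s_Y(r_Y,d_Y)$ of $\alpha_Y(N)=N\otimes E_Y$ is incompatible with Proposition \ref{XS}. We may assume $Y$ has at least one node; when $Y$ is smooth, $\overline{J}(Y)=J(Y)$ and there is nothing to prove. The plan is to identify what the value of $\overline{\alpha}_Y$ at a boundary point would have to be, and then show that this value is not a stable bundle.

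The first step is to describe the boundary via the partial normalisations. Take $s=1$: let $p^S: X^S\to Y$ blow up the single node $y_1$. Points of $\overline{J}(Y)\setminus J(Y)$ whose only non-locally-free point is $y_1$ have the form $p^S_*N_1$ with $N_1$ a line bundle on $X^S$. Such a point is a limit of a one-parameter family $N_\lambda\in J(Y)$, $\lambda\in C\setminus\{c_0\}$, with $C$ a smooth pointed curve. Because the family is continuous, $\overline{\alpha}_Y(p^S_*N_1)$ must be the limit in $U^s_Y(r_Y,d_Y)$ of the stable bundles $N_\lambda\otimes E_Y$. On the other hand, the family of sheaves $\mathcal{N}_\lambda\otimes E_Y$ on $Y\times C$ extends over $c_0$ to the torsion-free sheaf $p^S_*N_1\otimes E_Y \cong p^S_*(N_1\otimes (p^S)^*E_Y)$ (projection formula). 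This sheaf is not locally free at $y_1$.

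The second step is to apply Proposition \ref{XS}, which gives $(p^S)^*E_Y = F^S_Y\oplus \mathcal{O}_{X^S}$ with $F^S_Y$ stable. Hence
$$p^S_*N_1\otimes E_Y \cong p^S_*(N_1\otimes F^S_Y)\oplus p^S_*N_1 .$$
This is a nontrivial direct sum, so it is not stable. Its summand $p^S_*N_1$ has rank $1$ and degree $0$. Since $d_Y=r+1-2g<0$ would require $r<2g-1$, we have $d_Y/r_Y>0$ here, so the summand $p^S_*N_1$ has slope $0<\mu(E_Y)$. The complementary summand therefore has slope strictly larger than $\mu$, so the sheaf is in fact unstable.

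The third step is to invoke separatedness. Langton-type uniqueness of semistable limits, or properness of the moduli of torsion-free sheaves on the nodal curve $Y$, shows that the limit point of $N_\lambda\otimes E_Y$ is determined by the semistable reduction of the flat family. Since the natural limit above is unstable, the limit point cannot be a stable bundle. This contradicts $\overline{\alpha}_Y(p^S_*N_1)\in U^s_Y(r_Y,d_Y)$. Alternatively, a weaker form suffices: any extension would have to be given by a family of stable bundles over $\overline{J}(Y)$, and its restriction over $c_0$ would be $p^S_*N_1\otimes E_Y$ up to twisting by a line bundle on $C$; that sheaf is neither locally free nor stable.

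The main obstacle is the third step. We must rule out the possibility that an elementary modification of the family along $Y\times c_0$ produces a stable limit, since a stable limit would be a legitimate value for the extension. To exclude this, I would use the uniqueness of the stable limit in the proper moduli space $U_Y(r_Y,d_Y)$ of semistable torsion-free sheaves. Beyond that, the argument only needs the destabilising decomposition coming from Proposition \ref{XS}.
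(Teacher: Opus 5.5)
Your Steps 1 and 2 are exactly the paper's proof. Writing a non-locally-free $N$ as $p^S_*N'$, Proposition~\ref{XS} and the projection formula give $E_Y\otimes N\cong p^S_*(F^S_Y\otimes N')\oplus p^S_*((N')^{\oplus s})$, which is not semistable. Your slope check for $s=1$ is correct: the summands have slopes $0$ and $d_Y/(r_Y-1)$, on either side of $d_Y/r_Y>0$. The paper stops there. It only shows that the natural extension $N\mapsto N\otimes E_Y$, i.e.\ the family $\mathcal{V}(t)$ over $Y\times\overline{J}(Y)$, leaves the (semi)stable locus at every boundary point.

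The gap is Step 3, which is meant to exclude \emph{every} extension. The inference ``the flat limit $p^S_*N_1\otimes E_Y$ is unstable, hence the limit point of $N_\lambda\otimes E_Y$ in moduli is not a stable bundle'' does not follow. Separatedness and properness of the moduli of semistable torsion-free sheaves say only that the limit point is the class of the \emph{semistable reduction}. Langton's procedure produces that reduction by elementary modifications of the family along $Y\times c_0$. For example, the first step replaces the split special fibre by a possibly non-split extension $0\to p^S_*N_1\to E'_0\to p^S_*(N_1\otimes F^S_Y)\to 0$, determined by the first-order behaviour of the family. That is a different sheaf, and nothing in your argument decides whether it, or the final semistable limit, is stable or locally free. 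Uniqueness of limits only compares extensions whose special fibres are both (semi)stable, so an unstable flat limit gives no information. Your ``weaker form'' fails for the same reason. Two flat extensions over $C$ of the same family on $C\setminus\{c_0\}$ need not agree up to a line bundle from $C$ when the special fibres are not stable; they can differ by elementary transformations along $Y\times c_0$. To prove the strong statement you would have to compute the semistable limits, e.g.\ show that the Langton limits along different arcs through $p^S_*N_1$ differ, or that they are never stable vector bundles. The paper does not do this. Otherwise, drop Step 3 and state the conclusion in the weaker sense the paper actually proves.
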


\begin{proof}
      If $N$ is not locally free, then $N = p^S_* N'$ where $p^S$ is the partial normalisation obtained by blowing the $s$ number of nodes in $Y$ where $N$ is not locally free. One has 
$(p^S)^*(E_Y) = F^S_Y \oplus I^s$, where $F^S_Y$ is a stable vector bundle contradicting the semistability of $(p^S)^*(E_Y)$ and $I^s$ is the trivial vector bundle of rank $s$ on $X^S$.  Then $(p^S)^*(E_Y)\otimes N' = (F^S_Y\otimes N') \oplus (\oplus_s N')$. Taking $p^S_*$,  by projection formula we get $E_Y \otimes N = p^S_*(F^S_Y\otimes N') \oplus p^S_* (\oplus_s N')$ which is not semistable, so it does not belong to $U^{' s}_Y(r_Y, d_Y)$.
\end{proof}
\end{section}

\end{document}